\theoremstyle{plain} 
\newtheorem{thm}{Theorem} 
\newtheorem{lem}[thm]{Lemma} 
\newtheorem{cor}[thm]{Corollary} 
\theoremstyle{remark} 
\newtheorem*{rem}{Remark} 
\newtheorem{conj}{Conjecture}
\DeclareMathOperator{\mre}{Re} 
\DeclareMathOperator{\Hol}{Hol}
\DeclareMathOperator{\Arg}{Arg}
\DeclareMathOperator{\Lin}{Lin}
\title{Weak product spaces of Dirichlet series} 
\date{\today} 
\author{Ole Fredrik Brevig} \address{Department of Mathematical Sciences, Norwegian University of Science and Technology (NTNU), NO-7491 Trondheim, Norway} \email{ole.brevig@math.ntnu.no}
\author{Karl-Mikael Perfekt} \address{Department of Mathematical Sciences, Norwegian University of Science and Technology (NTNU), NO-7491 Trondheim, Norway} \email{karl-mikael.perfekt@math.ntnu.no}
\thanks{The first author is supported by Grant 227768 of the Research Council of Norway.}
\keywords{Dirichlet series, Hankel form, square function, weak product space.}
\subjclass[2010]{Primary 47B35. Secondary 30B50.}
\begin{document}

\begin{abstract}
		Let $\mathscr{H}^2$ denote the space of ordinary Dirichlet series with square summable coefficients, and let $\mathscr{H}^2_0$ denote its subspace consisting of series vanishing at $+\infty$. We investigate the weak product spaces $\mathscr{H}^2\odot\mathscr{H}^2$ and $\mathscr{H}^2_0\odot\mathscr{H}^2_0$,  finding that several pertinent problems are more tractable for the latter space. This surprising phenomenon is  related to the fact that $\mathscr{H}^2_0\odot\mathscr{H}^2_0$ does not contain the infinite-dimensional subspace of $\mathscr{H}^2$ of series which lift to linear functions on the infinite polydisc.

		The problems considered stem from questions about the dual spaces of these weak product spaces, and are therefore naturally phrased in terms of multiplicative Hankel forms. We show that there are bounded, even Schatten class, multiplicative Hankel forms on $\mathscr{H}^2_0 \times \mathscr{H}^2_0$ whose analytic symbols are not in $\mathscr{H}^2$. Based on this result we examine Nehari's theorem for such Hankel forms.  We define also the skew product spaces associated with  $\mathscr{H}^2\odot\mathscr{H}^2$ and $\mathscr{H}^2_0\odot\mathscr{H}^2_0$, with respect to both half-plane and polydisc differentiation, the latter arising from Bohr's point of view. In the process we supply square function characterizations of the Hardy spaces $\mathscr{H}^p$, for $0 < p < \infty$, from the viewpoints of  both types of differentiation. Finally we compare the skew product spaces to the weak product spaces, leading naturally to an interesting Schur multiplier problem.
\end{abstract} 

\maketitle

\section{Introduction}
In this paper, we investigate certain properties of weak product spaces associated with the Hardy space of Dirichlet series, 
\[\mathscr{H}^2 = \Bigg\{f(s)=\sum_{n=1}^\infty a_n n^{-s}\,:\,\|f\|_{\mathscr{H}^2}=\Big(\sum_{n=1}^\infty|a_n|^2\Big)^\frac{1}{2}<\infty\Bigg\},\]
and its subspace $\mathscr{H}^2_0$, consisting of those $f\in\mathscr{H}^2$ with $a_1=f(+\infty)=0$. The main objects of study are the weak product spaces $\mathscr{H}^2\odot\mathscr{H}^2$ and $\mathscr{H}^2_0 \odot\mathscr{H}^2_0$. With $\mathscr{X} = \mathscr{H}^2$ or $\mathscr{X} = \mathscr{H}^2_0$, the weak product $\mathscr{X} \odot \mathscr{X}$ is defined as the Banach space completion of the finite sums $F = \sum_k f_k g_k$, where $f_k, g_k \in \mathscr{X}$, under the norm
\[\|F\|_{\mathscr{X}\odot\mathscr{X}} =\inf \sum_k \|f_k\|_{\mathscr{H}^2} \|g_k\|_{\mathscr{H}^2}.\] 
The infimum is taken over all finite representations of $F$ as a sum of products. 

While a separate study of $\mathscr{H}^2_0 \odot\mathscr{H}^2_0$ may at first be thought unmotivated, we will find that the norm of this space is significantly larger for certain types of Dirichlet series (see Theorem \ref{thm:matrixemb} and its corollaries). The presence of such examples is related to the obstructions faced in producing monomials $n^{-s}$ in a product $f_kg_k$, for $f_k,g_k \in \mathscr{H}^2_0$, when $n$ is an integer with a low number of prime factors. In particular,  elements of  $\mathscr{H}^2_0 \odot\mathscr{H}^2_0$ contain no terms of the form $p^{-s}$, where $p$ is a prime number. Hence there is an easily identifiable infinite-dimensional subspace of  $\mathscr{H}^2 \odot\mathscr{H}^2$ which has trivial intersection with  $\mathscr{H}^2_0 \odot\mathscr{H}^2_0$.

The weak product space $\mathscr{H}^2\odot\mathscr{H}^2$ was first investigated by Helson \cite{Helson06,Helson10} in an attempt to decide whether Nehari's theorem holds for multiplicative Hankel forms (see also Section~\ref{sec:hankel}). Helson's work was continued in \cite{OCS12}, where it was demonstrated that Nehari's theorem does not hold in full generality. To explain his point of view, note that each sequence $\varrho\in\ell^2$ induces a (not necessarily bounded) multiplicative Hankel form on $\ell^2\times\ell^2$,
\begin{equation} \label{eq:mhankel}
	\varrho(a,b)=\sum_{m=1}^\infty\sum_{n=1}^\infty a_m b_n \varrho_{mn}, \qquad a,b \in \ell^2.
\end{equation}
The analytic symbol of \eqref{eq:mhankel} is the Dirichlet series
\begin{equation*}
	\varphi(s)=\sum_{n=1}^\infty \overline{\varrho_n} n^{-s}.
\end{equation*}
Indeed, if $f$ and $g$ are elements of $\mathscr{H}^2$ with coefficients $a$ and $b$, respectively, we have that
\begin{equation} \label{eq:Hphidef}
	H_\varphi(fg) = \langle fg,\varphi\rangle = \varrho(a,b).
\end{equation}
Here, and throughout the rest of the paper, $\langle \cdot, \cdot \rangle$ denotes the inner product of $\mathscr{H}^2$. 

Now, from \eqref{eq:Hphidef} it is clear that the multiplicative Hankel form \eqref{eq:mhankel} is bounded on $\ell^2 \times \ell^2$, or equivalently $H_\varphi$ on $\mathscr{H}^2 \times \mathscr{H}^2, $ if and only if $\varphi$ induces a bounded linear functional on $\mathscr{H}^2\odot\mathscr{H}^2$ through the $\mathscr{H}^2$-pairing, i.e. if and only if $\varphi$ is in $\left(\mathscr{H}^2\odot\mathscr{H}^2\right)^*$.

The first bona fide example of a multiplicative Hankel form was obtained in \cite{BPSSV}, by the following approach. Note first that the elements of $\mathscr{H}^2$ are analytic functions in the half-plane $\mre s > 1/2$, the reproducing kernel at each such $s$ being given by $\zeta(w + \overline{s})$, where $\zeta(s) = \sum_{n\geq1} n^{-s}$ is the Riemann zeta function. It is thus natural to consider the Carleman-type operator
\[\mathbf{H}f(s) = \int_{1/2}^\infty f(w)\left(\zeta(s+w)-1\right)\,dw\]
acting on $\mathscr{H}^2_0$, since $\left(\zeta(s+w)-1\right)$ is the reproducing kernel of $\mathscr{H}^2_0$ at $w$, for $w > 1/2$. The matrix of the operator $\mathbf{H}$ is that of the multiplicative Hankel form whose analytic symbol $\varphi$ is the primitive of $\left(\zeta(s+1/2)-1\right)$ in $\mathscr{H}^2_0$.  In \cite{BPSSV} it was shown that the operator norm of $\mathbf{H}$ on $\mathscr{H}^2_0$ is $\pi$, which in terms of its corresponding Hankel form means precisely that $|\langle fg,\varphi\rangle|\leq \pi \|f\|_{\mathscr{H}^2}\|g\|_{\mathscr{H}^2}$ for $f,g\in\mathscr{H}^2_0$. More explicitly written,
\begin{equation} \label{eq:hilbert}
	\Big|\sum_{m=2}^\infty\sum_{n=2}^\infty \frac{a_m b_n}{\sqrt{mn}\log(mn)}\Big|\leq \pi\Big(\sum_{m=2}^\infty |a_m|^2\Big)^\frac{1}{2}\Big(\sum_{n=2}^\infty |b_n|^2\Big)^\frac{1}{2}.
\end{equation}
As explained more thoroughly in \cite{BPSSV}, inequality \eqref{eq:hilbert} is a multiplicative analogue of the classical Hilbert inequality
\begin{equation} \label{eq:ahilb1}
	\Big|\sum_{m=1}^\infty\sum_{n=1}^\infty\frac{a_mb_n}{m+n}\Big|  \leq \pi\Big(\sum_{m=1}^\infty |a_m|^2\Big)^\frac{1}{2}\Big(\sum_{n=1}^\infty |b_n|^2\Big)^\frac{1}{2}. 
\end{equation}
There are several other versions of \eqref{eq:ahilb1} which are also usually referred to as Hilbert's inequality --- we direct the interested reader to \cite[Ch.~IX]{HLP}. Let us extract a few facts. First, that by discretization of the continuous version of \eqref{eq:ahilb1} and the Hermite--Hadamard inequality, the following improvement of \eqref{eq:ahilb1} can be obtained.
\[\Big|\sum_{m=0}^\infty\sum_{n=0}^\infty \frac{a_mb_n}{m+n+1}\Big|  \leq \pi\Big(\sum_{m=0}^\infty |a_m|^2\Big)^\frac{1}{2}\Big(\sum_{n=0}^\infty |b_n|^2\Big)^\frac{1}{2}. 
\]
We mention without proof that the same procedure (with some additional straightforward estimates) yields in the multiplicative setting that
\[\Big|\sum_{m=1}^\infty\sum_{n=1}^\infty \frac{a_m b_n}{\sqrt{(m+1/2)(n+1/2)}\log((m+1/2)(n+1/2))}\Big|\leq\pi\|a\|_{\ell^2}\|b\|_{\ell^2},\]
which of course no longer represents a multiplicative Hankel form. 

The strongest version of Hilbert's inequality \eqref{eq:ahilb1} is
\begin{equation} \label{eq:ahilb4}
	\Big|\sum_{m,n\geq0 \atop m+n>0} \frac{a_mb_n}{m+n}\Big|  \leq \pi\Big(\sum_{m=0}^\infty |a_m|^2\Big)^\frac{1}{2}\Big(\sum_{n=0}^\infty |b_n|^2\Big)^\frac{1}{2}.
\end{equation}
This last variant can also be stated for two-tailed sequences $\{a_m\}_{m\in\mathbb{Z}}$ and $\{b_n\}_{n\in\mathbb{Z}}$. The proof of \eqref{eq:ahilb4} amounts to a concrete application of Nehari's theorem on $H^2(\mathbb{T})$, since the associated Hankel form has the bounded symbol $\Phi$ of supremum norm $\pi$,
\begin{equation} \label{eq:neharisymbol}
	\Phi(z) = -i\Arg(z) = i(\pi-\theta), \qquad z=e^{i\theta}.
\end{equation}
As far as the authors are aware, all proofs of \eqref{eq:ahilb4} in the literature make use of (a reformulation of) \eqref{eq:neharisymbol}.

Whether the multiplicative Hankel form \eqref{eq:hilbert} has a bounded symbol is an open problem that is related to a long standing embedding problem of $\mathscr{H}^1$ (see \cite[Sec.~6]{BPSSV}). It therefore natural to ask if we also have
	\begin{equation} \label{eq:quehilbert}
	|\langle fg,\varphi\rangle|\leq\pi\|f\|_{\mathscr{H}^2}\|g\|_{\mathscr{H}^2}, \qquad f,g\in\mathscr{H}^2?
	\end{equation}
In light of the discussion above, this question actually turns out to be more subtle than what one might expect at first. We are unable to settle it, seemingly due to the lack of a Nehari theorem for multiplicative Hankel forms.

That \eqref{eq:quehilbert} is significantly easier to settle for $\mathscr{H}^2_0$ than for $\mathscr{H}^2$ is not a peculiarity, but rather an ongoing theme for all the questions we will ask about product spaces in this paper. Note that inequality \eqref{eq:hilbert} is easily recast as a question about (the dual space of) $\mathscr{H}^2_0\odot\mathscr{H}^2_0$. More generally, elements of $\left(\mathscr{H}^2_0\odot\mathscr{H}^2_0\right)^*$ correspond to multiplicative Hankel forms of the type \eqref{eq:mhankel}, but with sums starting at $m,n=2$. The remainder of this section is an overview of the problems that we will consider.

In Section~\ref{sec:hankel}, we investigate the difference between Hankel forms on $\mathscr{H}^2\times\mathscr{H}^2$ and Hankel forms on $\mathscr{H}^2_0\times\mathscr{H}^2_0$. After some preliminaries, we obtain the main result of this section, Theorem~\ref{thm:matrixemb}, which allows us to embed any bounded operator $C\colon\ell^2\to\ell^2$ into a Hankel form on $\mathscr{H}^2_0\times\mathscr{H}^2_0$. This result is the basis for our observation that $\mathscr{H}^2_0 \odot \mathscr{H}^2_0$ is significantly smaller than $\mathscr{H}^2 \odot \mathscr{H}^2$, and it is also an important tool in the proofs of our other main results. 

Helson \cite{Helson06} proved that any Hankel form on $\mathscr{H}^2\times\mathscr{H}^2$ which is of Hilbert-Schmidt class $S_2$ is induced by a bounded symbol on the infinite polytorus $\mathbb{T}^\infty$. In \cite{BP2014} it was shown that if $p > p_0 \approx 5.74$, then there is a Hankel form on $\mathscr{H}^2\times\mathscr{H}^2$ of Schatten class $S_p$ that does not have any bounded symbol, leading to the conjecture that the same might be true for all $p > 2$. In Theorem \ref{thm:helsonopt} we will prove that $p=2$ is indeed critical in this sense for multiplicative Hankel forms acting on $\mathscr{H}^2_0 \times \mathscr{H}^2_0$, leading us closer to optimality of Helson's result. In fact, for $p>2$ we will even demonstrate the existence of forms in $S_p$ that do not have square-integrable symbols on the polytorus.

The penultimate section is devoted to the study of the skew product space $\partial^{-1}\left(\mathscr{H}^2\odot\partial\mathscr{H}^2\right)$. The motivation to study this space is twofold. Firstly, characterizations of the dual spaces of skew products are often significantly easier to obtain (see \cite{AP12,ARSW}). Secondly, for the classical Hardy space $H^2$, the comparison between $H^2 \odot H^2$ and $\partial^{-1} \left( H^2 \odot \partial H^2\right)$ leads naturally to a Schur multiplier problem for Hankel matrices. Much has been written about this problem, owing to the fact that it was closely related to Pisier's construction of a polynomially bounded operator not similar to a contraction. We refer the reader to \cite{Bourgain86,DP97,Ferguson96,Pisier97}.

We begin Section~\ref{sec:skew} by proving a square function characterization of $\mathscr{H}^p$, which is of independent interest for the study of Hardy spaces of Dirichlet series. Due to the notation involved, we defer a precise statement to Theorem~\ref{thm:sqfcn}. We first use this characterization to prove that
\begin{equation} \label{eq:inclusions}
	\mathscr{H}^2\odot\mathscr{H}^2 \subseteq \partial^{-1}\left(\partial\mathscr{H}^2\odot\mathscr{H}^2\right) \subsetneq \mathscr{H}^1.
\end{equation}
We then study whether the first inclusion in \eqref{eq:inclusions} is strict. This appears to be a difficult question, but by Schur multiplier methods we are able to demonstrate that this is the case if every appearance of $\mathscr{H}^2$ in \eqref{eq:inclusions} is replaced by $\mathscr{H}^2_0$. 

Finally, in Section~\ref{sec:radial}, we look at the material of Section \ref{sec:skew} again, but with the Hardy spaces of the polydisc in mind. Noting that Dirichlet series differentiation gives rise to a rather unnatural differentiation operator on the polydisc, we prove instead a square function characterization of $H^p(\mathbb{T}^\infty)$ that is adapted to the radial differentiation operator
\begin{equation} \label{eq:Rdiff}
	R = \sum_{j=1}^\infty z_j \partial_{z_j}.
\end{equation}
This will allow us to conclude that on finite-dimensional tori, it holds that
\begin{equation*} 
H^2(\mathbb{T}^d) \odot H^2(\mathbb{T}^d) = R^{-1}\left(H^2(\mathbb{T}^d) \odot R H^2(\mathbb{T}^d)\right) = H^1(\mathbb{T}^d). 
\end{equation*}
It also turns out that radial differentiation has a number theoretic interpretation when considered from the Dirichlet series point of view, something that too will be elaborated upon in Section~\ref{sec:radial}.

\subsection*{Notation} As usual, $\{p_j\}_{j\geq1}$ denotes the sequence of prime numbers in increasing order, and $\Omega(n)$ will denote the number of prime factors in $n$, counting multiplicities. We will write $f \ll g$ to indicate that there is some positive constant $C$ so that $|f(x)|\leq C|g(x)|$. If both $f \ll g$ and $g \ll f$, we write $f \asymp g$. 

When we speak of a Dirichlet series $\varphi$ as an element of a dual space $\mathscr{K}^*$, where $\mathscr{K}$ is a Banach space of Dirichlet series in which the space of Dirichlet polynomials $\mathscr{P}$ is dense, we always mean that the functional induced by $\varphi$ via the $\mathscr{H}^2$-pairing is bounded. That is, $\varphi \in \mathscr{K}^*$ if and only if the functional
\[ \upsilon_\varphi(f) = \langle f, \varphi \rangle, \qquad f \in \mathscr{P}, \]
extends to a bounded functional on $\mathscr{K}$. Similarly, when we write that $\mathscr{K}^* \subseteq \mathscr{X}$, where $\mathscr{X}$ is a Banach space of Dirichlet series, we mean that for every functional $\upsilon \in \mathscr{K}^*$ there exists a $\varphi \in \mathscr{X}$ such that $\upsilon = \upsilon_\varphi$ (on $\mathscr{P}$) and $\|\varphi\|_{\mathscr{X}} \ll \|\upsilon\|_{\mathscr{K}^*}$. 

\section{Hankel forms and a matrix embedding} \label{sec:hankel}
Much of the success in the theory of Hardy spaces of Dirichlet series is due to a simple observation of Bohr \cite{Bohr1913}, which facilitates a link between Dirichlet series and function theory in polydiscs. By identifying each prime number with an independent variable, $z_j = p_j^{-s}$, the Dirichlet series $f(s)=\sum_{n\geq1} a_nn^{-s}$
is lifted to a function in the Hardy space of the countably infinite torus, $H^2(\mathbb{T}^\infty)$. More precisely, the prime factorization
\[n = \prod_{j=1}^\infty p_j^{\kappa_j}\]
associates to $n$ the finite non-negative multi-index $\kappa(n)=(\kappa_1,\kappa_2,\kappa_3,\ldots)$. This means that the Bohr lift of $f$ is
\[\mathscr{B}f(z)=\sum_{n=1}^\infty a_n z^{\kappa(n)},\]
where $z = (z_1, z_2, z_3, \ldots)$.
The mapping $\mathscr{B}\colon\mathscr{H}^2\to H^2(\mathbb{T}^\infty)$ is an isometric isomorphism that respects multiplication. $\mathbb{T}^\infty$ is a compact abelian group, and its Haar measure is denoted by $m_\infty$. The measure $m_\infty$ is equal to the product of the normalized Lebesgue measure on $\mathbb{T}$ in each variable. In particular, $H^2(\mathbb{T}^d)$ is a natural subspace of $H^2(\mathbb{T}^\infty)$. We refer to \cite{HLS97,QQ13} for further properties of $H^2(\mathbb{T}^\infty)$. 

In \cite{Bayart02}, Bayart introduced the spaces $\mathscr{H}^p$, for $1\leq p<\infty$, as those Dirichlet series $f$ such that $\mathscr{B}f\in H^p(\mathbb{T}^\infty)$, and we define the $\mathscr{H}^p$-norm as
\[\|f\|_{\mathscr{H}^p} = \left(\int_{\mathbb{T}^\infty}|\mathscr{B}f(z)|^p\,dm_\infty(z)\right)^\frac{1}{p}.\]
As above, $H^p(\mathbb{T}^d)$ is a natural subspace of $H^p(\mathbb{T}^\infty)\simeq \mathscr{H}^p$.

Returning to the multiplicative Hankel form $H_\varphi$ defined in \eqref{eq:Hphidef}, the fact that $\mathscr{B}$ respects multiplication implies that
\[H_\varphi(fg) = \langle \mathscr{B}f\mathscr{B}g,\,\mathscr{B}\varphi\rangle_{H^2(\mathbb{T}^\infty)}.\]
From this representation, it is clear that we may replace $\mathscr{B}\varphi$ with any $\psi \in L^2(\mathbb{T}^\infty)$ such that $P\psi = \mathscr{B}\varphi$, where $P$ denotes the Riesz projection from $L^2(\mathbb{T}^\infty)$ to $H^2(\mathbb{T}^\infty)$. In this case, we also denote the Hankel form $H_\varphi$ by $H_\psi$. If $\psi \in L^\infty(\mathbb{T}^\infty)$, then $\|H_\varphi\|\leq\|\psi\|_\infty$, where $\|H_\varphi\|$ denotes the norm of $H_\varphi$ acting on $\mathscr{H}^2 \times \mathscr{H}^2$, and we say that $H_\varphi$ has bounded symbol $\psi$. Note that if the functional 
\begin{equation*}
\label{eq:h1bdd}
f \mapsto \langle f, \varphi \rangle, \qquad f \in \mathscr{H}^1,
\end{equation*}
 is bounded on $\mathscr{H}^1 \simeq H^1(\mathbb{T}^\infty) \subset L^1(\mathbb{T}^\infty)$, then $H_\varphi$ has a bounded symbol by the Hahn-Banach theorem. Hence, $H_\varphi$ has a bounded symbol if and only if $\varphi \in (\mathscr{H}^1)^\ast$.

The main result of \cite{OCS12} implies that there exist bounded multiplicative Hankel forms that do not have a bounded symbol. It should be pointed out that the proof is non-constructive, and no example of a bounded multiplicative Hankel form without a bounded symbol has been identified. On the other hand, if $d=1$ then Nehari's theorem \cite{Nehari} states that every bounded Hankel form $H_\varphi$ on $H^2(\mathbb{T}^d)\odot H^2(\mathbb{T}^d)$ has a bounded symbol $\psi \in L^\infty(\mathbb{T}^d)$. Nehari's theorem has been extended to $d<\infty$ by Ferguson--Lacey \cite{FL02} and Lacey--Terwilleger \cite{LT09}.

The matrix of the multiplicative Hankel form \eqref{eq:Hphidef} is
\begin{equation} \label{eq:Mrho}
	M_\varrho = \big( \varrho_{mn} \big)_{m,n\geq1} = \begin{pmatrix}
		\varrho_1 & \varrho_2 & \varrho_3 & \cdots \\
		\varrho_2 & \varrho_4 & \varrho_6 & \cdots \\
		\varrho_3 & \varrho_6 & \varrho_9 & \cdots \\
		\vdots    & \vdots    & \vdots    & \ddots
	\end{pmatrix}.
\end{equation}
By isolating the first row and column in $M_\varrho$ using the inner product representation of $H_\varphi$ from \eqref{eq:Hphidef}, we obtain
\begin{equation} \label{eq:decomp}
	H_\varphi(fg) = a_1b_1\varrho_1 + a_1\left\langle g-b_1,\varphi \right\rangle + b_1\left\langle f-a_1,\varphi\right\rangle + H_\varphi\big((f-a_1)(g-b_1)\big).
\end{equation}
The left hand side is a bounded Hankel form if and only if $\varphi \in \left(\mathscr{H}^2\odot\mathscr{H}^2\right)^\ast$, while the right hand side is bounded if and only if $\varphi \in \left(\mathscr{H}^2_0\right)^\ast = \mathscr{H}^2/\mathbb{C}$ and $\varphi \in \left(\mathscr{H}^2_0\odot\mathscr{H}^2_0\right)^\ast$. While it is obvious that 
\begin{equation} \label{eq:H23}
	\left(\mathscr{H}^2\odot\mathscr{H}^2\right)^\ast \subseteq \mathscr{H}^2,
\end{equation}
we shall now see that the corresponding statement for $\mathscr{H}^2_0$ is not true. This will follow immediately from our next result, which also is crucial in establishing the other main results of the paper.

\begin{thm}[Matrix embedding] \label{thm:matrixemb}
	Let $C=(c_{j,k})_{j,k\geq1}$ be an infinite matrix defining an operator on $\ell^2$. Consider the Dirichlet series
	\[\varphi(s) = \sum_{j=1}^\infty \sum_{k=1}^\infty  c_{j,k} \left(p_{2j-1}p_{2k}\right)^{-s},\]
	where $\{p_j\}_{j \geq 1}$ denotes the sequence of primes numbers in increasing order.
	Then
	\begin{itemize}
		\item[(a)] $\|H_\varphi\|_0 = \|C\|$,
		\item[(b)] $\|H_\varphi\| \asymp \|C\|_{S_2} = \|\varphi\|_{\mathscr{H}^2}$,
	\end{itemize}
	where $\|H_\varphi\|_0$ denotes the norm of $H_\varphi$ acting on $\mathscr{H}^2_0\times\mathscr{H}^2_0$, and $\|C\|_{S_2}$ denotes the Hilbert--Schmidt matrix norm of $C$, $$\|C\|_{S_2} = \Bigg(\sum_{j=1}^\infty \sum_{k=1}^\infty |c_{j,k}|^2\Bigg)^\frac{1}{2}.$$
\end{thm}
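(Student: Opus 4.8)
The plan is to read off both Hankel-form norms directly from the explicit coefficient structure of $\varphi$. Writing $a=(a_n),b=(b_n)$ for the coefficient sequences of $f,g\in\mathscr{H}^2$ and $\|\cdot\|$ for the $\ell^2$-norm, I would first record that the $n$-th coefficient of $\varphi$ equals $c_{j,k}$ when $n=p_{2j-1}p_{2k}$ — a representation which is unique, since $2j-1$ is always odd and $2k$ even — and vanishes otherwise; thus in the notation of \eqref{eq:mhankel} the symbol $\varrho$ is supported on squarefree integers having one prime factor of odd index and one of even index, with $\varrho_{p_{2j-1}p_{2k}}=\overline{c_{j,k}}$. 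It is convenient to abbreviate $x_j=a_{p_{2j-1}}$, $y_k=a_{p_{2k}}$, $w_{j,k}=a_{p_{2j-1}p_{2k}}$, and analogously $u_j=b_{p_{2j-1}}$, $v_k=b_{p_{2k}}$, $z_{j,k}=b_{p_{2j-1}p_{2k}}$; since the integers $1$, $\{p_{2j-1}\}$, $\{p_{2k}\}$, $\{p_{2j-1}p_{2k}\}$ are pairwise distinct, Parseval gives $|a_1|^2+\|x\|^2+\|y\|^2+\|w\|^2\le\|f\|_{\mathscr{H}^2}^2$ and the analogous bound for $g$.

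For (a) I would expand $H_\varphi(fg)=\langle fg,\varphi\rangle$ via \eqref{eq:Hphidef}. Using $a_1=b_1=0$ and unique factorization, the coefficient of $fg$ at $p_{2j-1}p_{2k}$ produced by the admissible factorizations $mn=p_{2j-1}p_{2k}$ with $m,n\ge2$ is just $x_jv_k+y_ku_j$, so $H_\varphi(fg)=\sum_{j,k}\overline{c_{j,k}}(x_jv_k+y_ku_j)$ — here it is essential that products of two odd-indexed (or two even-indexed) primes do not lie in the support of $\varrho$, which is exactly what the odd/even split achieves. Two applications of Cauchy--Schwarz — first $|\sum_{j,k}\overline{c_{j,k}}x_jv_k|\le\|C\|\,\|x\|\,\|v\|$ and the same for the other term, then $\|x\|\|v\|+\|y\|\|u\|\le\sqrt{\|x\|^2+\|y\|^2}\,\sqrt{\|u\|^2+\|v\|^2}$ — give $\|H_\varphi\|_0\le\|C\|$. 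The reverse bound follows by testing with $f=\sum_j x_jp_{2j-1}^{-s}\in\mathscr{H}^2_0$ and $g=\sum_k v_kp_{2k}^{-s}\in\mathscr{H}^2_0$ for unit vectors $x,v$ chosen so that $|\sum_{j,k}\overline{c_{j,k}}x_jv_k|$ is as close to $\|\overline{C}\|=\|C\|$ as we like.

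For (b), the equality $\|C\|_{S_2}=\|\varphi\|_{\mathscr{H}^2}$ is immediate from $\|\varphi\|_{\mathscr{H}^2}^2=\sum_n|\varrho_n|^2=\sum_{j,k}|c_{j,k}|^2$, so I may assume $C\in S_2$ (otherwise both sides are infinite). For the upper bound I would use the splitting \eqref{eq:decomp} with $\varrho_1=0$: its three nonzero terms are $\langle g-b_1,\varphi\rangle=\sum_{j,k}\overline{c_{j,k}}z_{j,k}$, $\langle f-a_1,\varphi\rangle=\sum_{j,k}\overline{c_{j,k}}w_{j,k}$ and $H_\varphi((f-a_1)(g-b_1))=\sum_{j,k}\overline{c_{j,k}}(x_jv_k+y_ku_j)$, each a pairing against $(c_{j,k})_{j,k}\in\ell^2$; since $\|x\otimes v\|=\|x\|\|v\|$ and likewise for $u\otimes y$, Cauchy--Schwarz gives $|H_\varphi(fg)|\le\|C\|_{S_2}(|a_1|\|z\|+|b_1|\|w\|+\|x\|\|v\|+\|y\|\|u\|)$, and one further Cauchy--Schwarz, pairing $(|a_1|,\|w\|,\|x\|,\|y\|)$ with $(\|z\|,|b_1|,\|v\|,\|u\|)$ and invoking the Parseval bounds above, bounds the parenthesis by $\|f\|_{\mathscr{H}^2}\|g\|_{\mathscr{H}^2}$. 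The matching lower bound comes from testing with $f\equiv1$ and $g=\|C\|_{S_2}^{-1}\sum_{j,k}c_{j,k}(p_{2j-1}p_{2k})^{-s}$, both of unit norm, for which $H_\varphi(fg)=\langle g,\varphi\rangle=\|C\|_{S_2}$; this actually yields the clean identity $\|H_\varphi\|=\|C\|_{S_2}$, of which (b) is a weakening.

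There is no genuine obstacle here — the substance is entirely bookkeeping. The one point that needs care is organizing $H_\varphi(fg)$ by prime factorization so as to be sure nothing is missed, and, in the matching lower bounds, recognizing that the relevant inequalities are saturated by concentrating all of the mass of $f$ and $g$ on the handful of integers the form actually detects. The role of the odd/even index split is precisely to make the restricted form see $C$ as a bona fide operator on $\ell^2$ rather than, say, a Hankel-type matrix; and the difference between (a) and (b) is that passing from $\mathscr{H}^2_0$ to $\mathscr{H}^2$ reactivates the constant terms $a_1,b_1$, which couple to the coefficients $w,z$ of $f,g$ at the composite integers $p_{2j-1}p_{2k}$ and inflate the norm from the operator-norm scale $\|C\|$ up to the Hilbert--Schmidt scale $\|C\|_{S_2}$.
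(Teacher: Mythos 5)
Your proof is correct and follows essentially the same route as the paper: the same odd/even prime splitting and unique-factorization bookkeeping for (a), and the decomposition \eqref{eq:decomp} for (b). The only noteworthy difference is that your more careful Cauchy--Schwarz pairing of the four mutually orthogonal coefficient blocks $(|a_1|,\|w\|,\|x\|,\|y\|)$ against $(\|z\|,|b_1|,\|v\|,\|u\|)$ actually yields the sharp identity $\|H_\varphi\|=\|C\|_{S_2}$, improving on the constant $4$ implicit in the paper's ``$\asymp$'' (the paper instead records part (a) as the unitary equivalence $H_\varphi\big|_{\mathscr{H}^2_0}\simeq\mathcal{J}(C^T\oplus C)\oplus 0$, which it reuses later to compute Schatten norms).
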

\begin{proof}
	Let $f,g \in \mathscr{H}^2_0$ with coefficients $\{a_j\}_{j\geq1}$ and $\{b_k\}_{k \geq 1}$, respectively. Since there are no constant terms in $\mathscr{H}^2_0$ we have that
	\begin{equation} \label{eq:Hphifg}
		H_\varphi(fg)=\left\langle fg, \varphi \right\rangle = \sum_{j=1}^\infty \sum_{k=1}^\infty\big(a_{p_{2j-1}}b_{p_{2k}}+a_{p_{2k}}b_{p_{2j-1}}\big)c_{j,k}.
	\end{equation}
	Note that for every prime $p$, $a_p$ and $b_p$ each only appear once in this sum. Let
\[\mathscr{K}_1 = \operatorname{span} \{p_{2k-1} ^{-s} \, : \, k\geq 1\}, \quad \mathscr{K}_2 = \operatorname{span} \{p_{2k} ^{-s} \, : \, k\geq 1\}, \quad \mathscr{K}_3 = \mathscr{H}^2_0 \ominus  \left(\mathscr{K}_1 \oplus \mathscr{K}_2 \right), \]
and let $P_{\mathscr{K}_j}$ denote the corresponding orthogonal projections. Let $\mathbf{a}_j$ and $\mathbf{b}_j$ denote the coefficient sequences, in the natural basis of $\mathscr{K}_j$, of $ P_{\mathscr{K}_j}f$ and $P_{\mathscr{K}_j}g$, respectively.  Then we may rewrite  \eqref{eq:Hphifg} as 
\[H_\varphi(fg) = \langle C \mathbf{b}_2, \mathbf{a}_1 \rangle_{\ell^2} +  \langle C \mathbf{a}_2, \mathbf{b}_1 \rangle_{\ell^2} = \langle \mathcal{J}(C^T \oplus C)(\mathbf{a}_1, \mathbf{a}_2), (\mathbf{b}_1, \mathbf{b}_2) \rangle_{\ell^2\oplus\ell^2}, \]
where $\mathcal{J}$ is the involution on $\ell^2\oplus\ell^2$ defined by $ \mathcal{J}(\mathbf{a}_1, \mathbf{a}_2) = (\mathbf{a}_2, \mathbf{a}_1)$. We conclude that
	\[H_\varphi\big|_{\mathscr{H}^2_0} \simeq \mathcal{J}(C^T\oplus C) \oplus 0,\] 
	completing the proof of (a). For (b), we first observe that setting $g=1$ implies $\|H_\varphi\|\geq\|\varphi\|_{(\mathscr{H}^2)^\ast}=\|\varphi\|_{\mathscr{H}^2} = \|C\|_{S_2}$. Returning to the decomposition \eqref{eq:decomp} we see that $\|H_\varphi\|\leq 4\|C\|_{S_2}$, by using (a).
\end{proof}
As a corollary of Theorem~\ref{thm:matrixemb}, we obtain that a bounded Hankel form on $\mathscr{H}^2_0 \times \mathscr{H}^2_0$ does not necessarily have a symbol in $L^2(\mathbb{T}^\infty)$, in stark contrast with the classical situation where bounded Hankel forms have bounded symbols. We also find that \eqref{eq:H23} does not hold for $\mathscr{H}^2_0$.
\begin{cor} \label{cor:bigdual}
	$\left(\mathscr{H}^2_0\odot\mathscr{H}^2_0\right)^\ast \not\subseteq \mathscr{H}^2$. That is, there are bounded multiplicative Hankel forms $H_\varphi$ on $\mathscr{H}^2_0 \times \mathscr{H}^2_0$ with the property that there is no $\psi \in L^2(\mathbb{T}^\infty)$ such that $H_\varphi = H_\psi$. 
\end{cor}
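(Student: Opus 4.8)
The plan is to read the corollary off of Theorem~\ref{thm:matrixemb}: essentially all of the work is already contained in part~(a), and what remains is to choose $C$ well and to translate between the two formulations of the conclusion.

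First I would take $C$ to be any bounded operator on $\ell^2$ which is not Hilbert--Schmidt; the identity $C = I$ is the cleanest choice, giving $\varphi(s) = \sum_{j\geq1}(p_{2j-1}p_{2j})^{-s}$. By Theorem~\ref{thm:matrixemb}(a), $H_\varphi$ is bounded on $\mathscr{H}^2_0\times\mathscr{H}^2_0$ with $\|H_\varphi\|_0 = \|I\| = 1$. Via \eqref{eq:Hphidef} and the definition of the weak product (exactly as in the discussion around \eqref{eq:decomp}, now with sums starting at $m,n=2$), this says precisely that $\upsilon_\varphi$ extends to a bounded functional on $\mathscr{H}^2_0\odot\mathscr{H}^2_0$, i.e. $\varphi\in(\mathscr{H}^2_0\odot\mathscr{H}^2_0)^\ast$. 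On the other hand $\|\varphi\|_{\mathscr{H}^2} = \|C\|_{S_2} = +\infty$, so $\varphi\notin\mathscr{H}^2$.

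It then remains to check that this contradicts both forms of the stated inclusion. For $(\mathscr{H}^2_0\odot\mathscr{H}^2_0)^\ast\not\subseteq\mathscr{H}^2$ in the sense fixed in the notation section, I would suppose some $\psi\in\mathscr{H}^2$ satisfied $\upsilon_\psi = \upsilon_\varphi$ on $\mathscr{P}$ and test against the monomials $(p_{2j-1}p_{2j})^{-s} = p_{2j-1}^{-s}\cdot p_{2j}^{-s}$, which lie in $\mathscr{H}^2_0\odot\mathscr{H}^2_0$ since each factor is in $\mathscr{H}^2_0$; since $j\mapsto p_{2j-1}p_{2j}$ is injective, the $\mathscr{H}^2$-pairing would force infinitely many coefficients of $\psi$ to equal $1$, contradicting $\psi\in\mathscr{H}^2$. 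For the equivalent formulation about symbols, recall that $H_\varphi = H_\psi$ with $\psi\in L^2(\mathbb{T}^\infty)$ means $P\psi = \mathscr{B}\varphi$; since the Riesz projection maps $L^2(\mathbb{T}^\infty)$ boundedly into $H^2(\mathbb{T}^\infty)$, this would give $\mathscr{B}\varphi\in H^2(\mathbb{T}^\infty)$, i.e. $\varphi\in\mathscr{H}^2$, again a contradiction. There is no genuine obstacle here: the content is Theorem~\ref{thm:matrixemb}(a), whose force is that boundedness of $H_\varphi$ on $\mathscr{H}^2_0\times\mathscr{H}^2_0$ depends only on $\|C\|$ and not on $\|C\|_{S_2}$, whereas membership in $\mathscr{H}^2$ is precisely the Hilbert--Schmidt condition; the only care needed is the routine bookkeeping that the test monomials are correctly indexed and genuinely belong to the weak product.
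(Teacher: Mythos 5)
Your proposal is correct and is essentially the paper's own proof: the paper's argument for Corollary~\ref{cor:bigdual} is precisely to apply Theorem~\ref{thm:matrixemb} with $C$ the identity operator, so that $\|H_\varphi\|_0=1$ while $\|\varphi\|_{\mathscr{H}^2}=\|C\|_{S_2}=\infty$. The extra bookkeeping you supply (testing against the monomials $(p_{2j-1}p_{2j})^{-s}$ and invoking the Riesz projection to rule out an $L^2(\mathbb{T}^\infty)$ symbol) is accurate but is left implicit in the paper.
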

\begin{proof}
	Use Theorem~\ref{thm:matrixemb} and let $C$ be the matrix of the identity operator on $\ell^2$.
\end{proof}
Actually, we have the following stronger version of Corollary \ref{cor:bigdual}, which can be proven by considering all diagonal operators $C$ on $\ell^2$ and using Theorem~\ref{thm:matrixemb}. It exemplifies concretely that $\mathscr{H}^2_0\odot\mathscr{H}^2_0$ is in some ways significantly smaller than $\mathscr{H}^2\odot\mathscr{H}^2$.
\begin{cor} \label{cor:ell1}
The Dirichlet series
\[f(s) = \sum_{k=1}^\infty a_{k}(p_{2k-1}p_{2k})^{-s}\]
is in $\mathscr{H}^2_0\odot\mathscr{H}^2_0$ if and only if $a \in \ell^1$, while it is in $\mathscr{H}^2\odot\mathscr{H}^2$ if and only if $a \in \ell^2$.
\end{cor}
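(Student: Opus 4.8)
The plan is to reduce everything to Theorem~\ref{thm:matrixemb} applied to diagonal matrices, so that the two genuinely analytic inequalities are already available; what remains is bookkeeping with the weak product norms.

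First I would settle the ``if'' implications. If $a\in\ell^2$ then $\|f\|_{\mathscr{H}^2}=\|a\|_{\ell^2}<\infty$, and since $1\in\mathscr{H}^2$ the single-term representation $f=f\cdot1$ already exhibits $f$ as a member of $\mathscr{H}^2\odot\mathscr{H}^2$ with $\|f\|_{\mathscr{H}^2\odot\mathscr{H}^2}\le\|f\|_{\mathscr{H}^2}$. If instead $a\in\ell^1$, consider the partial sums $f_N(s)=\sum_{k=1}^N a_k (p_{2k-1}p_{2k})^{-s}=\sum_{k=1}^N a_k\,p_{2k-1}^{-s}\,p_{2k}^{-s}$, each of which is a sum of $N$ products of elements of $\mathscr{H}^2_0$, so $\|f_N\|_{\mathscr{H}^2_0\odot\mathscr{H}^2_0}\le\sum_{k=1}^N|a_k|$. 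The same estimate applied to $f_N-f_M$ shows that $(f_N)$ is Cauchy, and its limit has the prescribed coefficients because the coefficient functionals are bounded on $\mathscr{H}^2_0\odot\mathscr{H}^2_0$ (a coefficient of a product $gh$ is controlled by $\|gh\|_{\mathscr{H}^1}\le\|g\|_{\mathscr{H}^2}\|h\|_{\mathscr{H}^2}$, so each $H_{n^{-s}}$ is a bounded Hankel form). Hence $f\in\mathscr{H}^2_0\odot\mathscr{H}^2_0$ with $\|f\|_{\mathscr{H}^2_0\odot\mathscr{H}^2_0}\le\|a\|_{\ell^1}$.

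For the ``only if'' implications I would use duality. If $\mathscr{X}$ is $\mathscr{H}^2$ or $\mathscr{H}^2_0$ and $\psi$ is a Dirichlet polynomial such that $H_\psi$ is bounded on $\mathscr{X}\times\mathscr{X}$, then $\psi\in(\mathscr{X}\odot\mathscr{X})^\ast$ and $|\langle h,\psi\rangle|\le\|H_\psi\|\,\|h\|_{\mathscr{X}\odot\mathscr{X}}$ for every $h$ --- first on finite sums of products, then on the completion by the continuity just noted. If $f\in\mathscr{H}^2_0\odot\mathscr{H}^2_0$, fix $N$ and pair $f$ against $\psi_N(s)=\sum_{1\le k\le N,\ a_k\neq0}(a_k/|a_k|)(p_{2k-1}p_{2k})^{-s}$; its matrix in the sense of Theorem~\ref{thm:matrixemb} is diagonal with operator norm at most $1$, so $\|H_{\psi_N}\|_0\le1$ by part~(a), while $\langle f,\psi_N\rangle=\sum_{k=1}^N|a_k|$. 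This gives $\sum_{k=1}^N|a_k|\le\|f\|_{\mathscr{H}^2_0\odot\mathscr{H}^2_0}$ for all $N$, so $a\in\ell^1$. If instead $f\in\mathscr{H}^2\odot\mathscr{H}^2$, pair $f$ against its own partial sums $f_N$; their matrix is diagonal with Hilbert--Schmidt norm $(\sum_{k=1}^N|a_k|^2)^{1/2}$, so $\|H_{f_N}\|\ll(\sum_{k=1}^N|a_k|^2)^{1/2}$ by part~(b), while $\langle f,f_N\rangle=\sum_{k=1}^N|a_k|^2$; cancelling one factor gives $(\sum_{k=1}^N|a_k|^2)^{1/2}\ll\|f\|_{\mathscr{H}^2\odot\mathscr{H}^2}$ uniformly in $N$, so $a\in\ell^2$.

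I expect no real obstacle here: the only delicate point is the standard passage from finite sums of products to the completion in the duality estimate, together with the continuity of the coefficient functionals, both addressed above. All the conceptual weight sits in the contrast between parts~(a) and~(b) of Theorem~\ref{thm:matrixemb}: on $\mathscr{H}^2_0$ the relevant size of a diagonal symbol is its operator (i.e.\ $\ell^\infty$) norm, which makes the $\ell^1$ test sequences inexpensive and forces $a\in\ell^1$, whereas on $\mathscr{H}^2$ only the Hilbert--Schmidt (i.e.\ $\ell^2$) norm is available, and this forces merely $a\in\ell^2$.
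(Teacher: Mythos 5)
Your argument is correct and is precisely the proof the paper intends: the remark preceding Corollary~\ref{cor:ell1} says it follows "by considering all diagonal operators $C$ on $\ell^2$ and using Theorem~\ref{thm:matrixemb}", and your duality pairings against the diagonal symbols $\psi_N$ (operator norm, via part (a)) and $f_N$ (Hilbert--Schmidt norm, via part (b)), together with the direct constructions for the sufficiency, are exactly that argument carried out in detail. The care you take with passing coefficient functionals to the completion is a legitimate point that the paper leaves implicit.
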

Recall that $H^2(\mathbb{T}^d)$ is a natural subspace of $H^2(\mathbb{T}^\infty)$ and that if $f \in \mathscr{H}^2_0$, then $\mathscr{B}f(0)=0$. We now observe that the inclusions behave as expected for the corresponding finite-dimensional subspaces of the weak product spaces. 
\begin{lem} \label{lem:polyincl}
	For $1 \leq d < \infty$, let $H^2_0(\mathbb{T}^d)$ denote the space of functions $F \in H^2(\mathbb{T}^d)$ for which $F(0,0,\ldots,0)=0$. Then 
	\[\big(H^2_0(\mathbb{T}^d)\odot H^2_0(\mathbb{T}^d)\big)^\ast \subseteq H^2_0(\mathbb{T}^d)\ominus\Lin(\mathbb{T}^d) \subseteq H^2_0(\mathbb{T}^d),\]
	where $\Lin(\mathbb{T}^d)$ denotes the subspace of $H_0^2(\mathbb{T}^d)$ consisting of linear functions, 
	\[\Lin(\mathbb{T}^d) = \Big\{L(z) = \sum_{j=1}^d a_j z_j \,:\, a_j \in \mathbb{C}\Big\}.\]
	\begin{proof}
		It is sufficient to show that
		\[H^2_0(\mathbb{T}^d)\ominus\Lin(\mathbb{T}^d) \subseteq H^2_0(\mathbb{T}^d)\odot H^2_0(\mathbb{T}^d),\]
		since it follows that any functional in $\big(H^2_0(\mathbb{T}^d)\odot H^2_0(\mathbb{T}^d)\big)^\ast$ must be represented by a unique element of $H^2_0(\mathbb{T}^d)\ominus\Lin(\mathbb{T}^d)$. Every $F \in H^2_0(\mathbb{T}^d)\ominus\Lin(\mathbb{T}^d)$ can be written 
		\[F(z) = \sum_{j=1}^d z_j F_j(z),\]
		where $F_j \in H^2_0(\mathbb{T}^d)$. This representation of $F$ is not unique, but we can always organize it so that $\sum_{j} \|F_j\|_{H^2(\mathbb{T}^d)}^2 = \|F\|_{H^2(\mathbb{T}^d)}^2$. By the computation 
		\[\|F\|_{H_0^2\odot H^2_0} \leq \sum_{j=1}^d 1\cdot \|F_j\|_{H^2(\mathbb{T}^d)} \leq \sqrt{d}\Big(\sum_{j=1}^d \|F_j\|_{H^2(\mathbb{T}^d)}^2\Big)^\frac{1}{2} = \sqrt{d}\|F\|_{H^2(\mathbb{T}^d)},\]
		we see that $F \in H^2_0 \odot H^2_0$.
	\end{proof}
\end{lem}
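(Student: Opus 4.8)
The plan is to prove the reverse inclusion $H^2_0(\mathbb{T}^d) \ominus \Lin(\mathbb{T}^d) \subseteq H^2_0(\mathbb{T}^d) \odot H^2_0(\mathbb{T}^d)$, since the stated chain of inclusions then follows by a duality argument: the weak product norm dominates the $H^2$-norm on this subspace, so a bounded functional on $H^2_0 \odot H^2_0$ restricts to a bounded functional on $H^2_0 \ominus \Lin(\mathbb{T}^d)$, which is represented by a unique element of that Hilbert subspace.

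The key step is a factorization: I claim every $F \in H^2_0(\mathbb{T}^d) \ominus \Lin(\mathbb{T}^d)$ can be written as $F(z) = \sum_{j=1}^d z_j F_j(z)$ with each $F_j \in H^2_0(\mathbb{T}^d)$, and moreover the representation can be chosen so that $\sum_j \|F_j\|_{H^2}^2 = \|F\|_{H^2}^2$. To see this, expand $F$ in monomials $z^\kappa$ over multi-indices $\kappa$ with $|\kappa| \geq 2$ (the condition $F \in H^2_0 \ominus \Lin$ removes the constant and the degree-one terms). For each such $\kappa$, pick the smallest index $j = j(\kappa)$ with $\kappa_j \geq 1$, and assign the monomial $z^\kappa$ to $F_{j(\kappa)}$ via $z^\kappa = z_j \cdot z^{\kappa - e_j}$; since $|\kappa| \geq 2$, the quotient $z^{\kappa - e_j}$ is a nonconstant monomial, so $F_j \in H^2_0(\mathbb{T}^d)$. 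Because distinct monomials of $F$ are sent to distinct monomials in the various $F_j$, the coefficients are merely redistributed and Parseval gives $\sum_j \|F_j\|_{H^2}^2 = \|F\|_{H^2}^2$. Then the weak product norm estimate is immediate:
\[
\|F\|_{H^2_0 \odot H^2_0} \leq \sum_{j=1}^d \|z_j\|_{H^2} \|F_j\|_{H^2} = \sum_{j=1}^d \|F_j\|_{H^2} \leq \sqrt{d}\,\Big(\sum_{j=1}^d \|F_j\|_{H^2}^2\Big)^{1/2} = \sqrt{d}\,\|F\|_{H^2},
\]
using $\|z_j\|_{H^2(\mathbb{T}^d)} = 1$ and Cauchy--Schwarz.

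I do not expect a serious obstacle here; the only thing to be careful about is the bookkeeping that ensures each $F_j$ genuinely lands in $H^2_0$ rather than merely in $H^2$ — this is exactly where the hypothesis $F \perp \Lin(\mathbb{T}^d)$ is used, guaranteeing $|\kappa| \geq 2$ so that $z^{\kappa - e_j}$ is nonconstant. One should also note that the factorization need not be orthogonal or canonical in any deeper sense; all that matters is the existence of \emph{some} representation achieving the Parseval identity, which the monomial-by-monomial assignment supplies for free. Finally, the passage from the inclusion of subspaces to the statement about dual spaces is standard: on the finite-dimensional-coefficient level $\mathscr{P}$ is dense in both spaces, the inclusion map $H^2_0 \ominus \Lin \hookrightarrow H^2_0 \odot H^2_0$ is bounded with dense range's annihilator trivial, so restriction of functionals is injective with the claimed norm control, completing the proof.
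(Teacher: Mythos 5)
Your proof is correct and follows essentially the same route as the paper: establish $H^2_0(\mathbb{T}^d)\ominus\Lin(\mathbb{T}^d)\subseteq H^2_0(\mathbb{T}^d)\odot H^2_0(\mathbb{T}^d)$ via a factorization $F=\sum_{j=1}^d z_jF_j$ with $F_j\in H^2_0$ and $\sum_j\|F_j\|_{H^2}^2=\|F\|_{H^2}^2$, then apply Cauchy--Schwarz and dualize. Your explicit monomial-by-monomial assignment (sending $z^\kappa$ to $F_{j(\kappa)}$ for the smallest $j$ with $\kappa_j\geq 1$) is exactly the ``organization'' the paper's proof asserts without spelling out.
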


It is clear that the final part of this argument breaks down for $d=\infty$; the key point being that the subspace $\Lin(\mathbb{T}^\infty)$ of linear functions in $H^2_0(\mathbb{T}^\infty) \simeq \mathscr{H}^2_0$ is infinite-dimensional, which from the Dirichlet series point of view corresponds to the fact that there are infinitely many prime numbers. Even so, Corollary \ref{cor:bigdual} is surprising. We stress that its conclusion is related to the additional arithmetical obstructions which appear when computing the norm of an element in  $\mathscr{H}^2_0 \odot\mathscr{H}^2_0$ rather than in  $\mathscr{H}^2 \odot\mathscr{H}^2$. The following result is intended to clarify this statement. In particular, it demonstrates that the subspace of linear functions actually is complemented in $\mathscr{H}^2\odot\mathscr{H}^2$. 

\begin{thm} \label{thm:mproj}
For a non-negative integer $m$, let $P_m$ denote the projection on $m$-homogeneous Dirichlet series,
\[P_m\sum_{n=1}^\infty a_n n^{-s} = \sum_{\Omega(n)=m} a_n n^{-s}.\]
Then $P_m$ is a contraction on $\mathscr{H}^2\odot\mathscr{H}^2$.
\end{thm}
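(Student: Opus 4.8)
The plan is to realize the $m$-homogeneous projection $P_m$ as an average of isometric \emph{multiplicative} automorphisms of $\mathscr{H}^2$; such automorphisms act isometrically on the weak product essentially by definition, and averaging them can only decrease the norm. For $w\in\mathbb{T}$ let $R_w$ be the operator on Dirichlet series determined by $R_w\big(\sum_n a_n n^{-s}\big)=\sum_n a_n w^{\Omega(n)}n^{-s}$; on the polydisc side this is just the diagonal rotation $\mathscr{B}(R_wf)(z)=\mathscr{B}f(wz)$. Since $z\mapsto wz$ preserves the Haar measure $m_\infty$ and sends $z^{\kappa(n)}$ to $w^{\Omega(n)}z^{\kappa(n)}$, each $R_w$ is an isometric isomorphism of $\mathscr{H}^2$ with $R_w(fg)=(R_wf)(R_wg)$. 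Hence for a finite sum $F=\sum_k f_kg_k$ one has $R_wF=\sum_k(R_wf_k)(R_wg_k)$ and $\sum_k\|R_wf_k\|_{\mathscr{H}^2}\|R_wg_k\|_{\mathscr{H}^2}=\sum_k\|f_k\|_{\mathscr{H}^2}\|g_k\|_{\mathscr{H}^2}$; taking the infimum over representations gives $\|R_wF\|_{\mathscr{H}^2\odot\mathscr{H}^2}\le\|F\|_{\mathscr{H}^2\odot\mathscr{H}^2}$, and applying the same to $R_{\bar w}$ shows $R_w$ extends to an isometric isomorphism of $\mathscr{H}^2\odot\mathscr{H}^2$.

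Next I would work on the dense subspace $\mathscr{P}$ of Dirichlet polynomials in $\mathscr{H}^2\odot\mathscr{H}^2$; density holds because the Dirichlet polynomials are dense in $\mathscr{H}^2$ and $\|fg-pq\|_{\mathscr{H}^2\odot\mathscr{H}^2}\le\|f-p\|_{\mathscr{H}^2}\|g\|_{\mathscr{H}^2}+\|p\|_{\mathscr{H}^2}\|g-q\|_{\mathscr{H}^2}$. Given $F\in\mathscr{P}$, let $D$ be the largest value of $\Omega(n)$ over the support of $F$, put $L=\max(D,m)+1$, and set $\omega_j=e^{2\pi i j/L}$. Comparing coefficients and using that $\frac1L\sum_{j=0}^{L-1}\omega_j^{\Omega(n)-m}$ equals $1$ when $\Omega(n)=m$ and $0$ otherwise (valid here since $|\Omega(n)-m|<L$ for every $n$ in the support of $F$) yields the finite averaging identity
\[ P_m F=\frac1L\sum_{j=0}^{L-1}\overline{\omega_j}^{\,m}\,R_{\omega_j}F. \]
Combining this with the triangle inequality in $\mathscr{H}^2\odot\mathscr{H}^2$ and the isometry of each $R_{\omega_j}$ established above gives
\[ \|P_mF\|_{\mathscr{H}^2\odot\mathscr{H}^2}\le\frac1L\sum_{j=0}^{L-1}\|R_{\omega_j}F\|_{\mathscr{H}^2\odot\mathscr{H}^2}=\|F\|_{\mathscr{H}^2\odot\mathscr{H}^2}. \]

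Finally I would extend by density. Since $\|F\|_{\mathscr{H}^1}\le\|F\|_{\mathscr{H}^2\odot\mathscr{H}^2}$ by the Cauchy--Schwarz inequality on $\mathbb{T}^\infty$, the coefficient functionals are continuous on $\mathscr{H}^2\odot\mathscr{H}^2$, so the contractive extension of $P_m|_{\mathscr{P}}$ agrees with the operator $P_m$ defined through its action on coefficients; this completes the proof. The only genuinely substantive point is the first paragraph: recognizing the $m$-homogeneous projection as an average over the circle action $w\mapsto R_w$ by isometric algebra automorphisms. Once that is in place, the finite-sum averaging and the density step are routine, and I do not expect either to cause trouble. (One could equally use the Bochner integral $P_mF=\int_{\mathbb{T}}\overline{w}^{\,m}R_wF\,dm(w)$, at the cost of a remark on the continuity of $w\mapsto R_wF$; the finite-average version sidesteps this.)
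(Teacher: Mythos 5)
Your proof is correct, but it takes a genuinely different route from the paper's. You realize $P_m$ as the $m$th Fourier coefficient of the circle action $w\mapsto R_w$ (the diagonal rotations $a_n\mapsto w^{\Omega(n)}a_n$), note that each $R_w$ is an isometric algebra automorphism of $\mathscr{H}^2$ and hence an isometry of the weak product, and then average; the discrete version of the average on Dirichlet polynomials, together with density and the continuity of coefficient functionals via $\|\cdot\|_{\mathscr{H}^1}\le\|\cdot\|_{\mathscr{H}^2\odot\mathscr{H}^2}$, closes the argument cleanly. The paper instead works directly with an arbitrary finite sum $F=\sum_k f_kg_k$, uses the identity $P_m(f_kg_k)=\sum_{j=0}^m (P_jf_k)(P_{m-j}g_k)$ (complete additivity of $\Omega$ again), and then applies the Cauchy--Schwarz inequality in $j$ together with the orthogonality $\sum_j\|P_jf\|_{\mathscr{H}^2}^2=\|f\|_{\mathscr{H}^2}^2$; this is more elementary and avoids any density or extension step, since it operates on the generic dense element of the completion. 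Your approach buys more generality: it shows at once that every multiplier $\sum_m\hat\mu(m)P_m$ associated with a complex measure $\mu$ on $\mathbb{T}$ with $\|\mu\|\le 1$ is a contraction on $\mathscr{H}^2\odot\mathscr{H}^2$, and it ties in naturally with the auxiliary-variable device $F_z(w)=\sum_m P_mF(z)w^m$ that the paper introduces in its final section. The one point worth keeping explicit, as you do, is the identification of the abstract contractive extension with the coefficientwise-defined $P_m$; with that in place the argument is complete.
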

\begin{proof}
	The case $m=0$ is trivial. Let $m\geq1$ and suppose that 
	\begin{equation} \label{eq:Fk}
	F = \sum_{k} f_kg_k
	\end{equation}
	is a finite sum. Then
	\[P_mF(s) = \sum_{k} \sum_{j=0}^m P_jf_k(s)P_{m-j}g_k(s).\] 
	By applying the definition of the norm of $\mathscr{H}^2\odot\mathscr{H}^2$ and the Cauchy--Schwarz inequality, we find that
	\begin{align*}
		\|P_mF\|_{\mathscr{H}^2\odot\mathscr{H}^2} &\leq \sum_{k} \sum_{j=0}^m \|P_jf_k\|_{\mathscr{H}^2}\|P_{m-j}g_k\|_{\mathscr{H}^2} \\
		&\leq \sum_{k}\Big(\sum_{j=0}^m \|P_j f_k\|_{\mathscr{H}^2}^2 \Big)^\frac{1}{2}\Big(\sum_{j=0}^m \|P_{m-j}g_k\|_{\mathscr{H}^2}^2\Big)^\frac{1}{2} \\
		&\leq \sum_{k}\|f_k\|_{\mathscr{H}^2}\|g_k\|_{\mathscr{H}^2},
	\end{align*}
	the final inequality following from the fact that
	$$\sum_{j=0}^\infty \|P_jf\|_{\mathscr{H}^2}^2 = \|f\|_{\mathscr{H}^2}^2, \qquad f \in \mathscr{H}^2.$$
	The proof is completed by taking the infimum over the representations \eqref{eq:Fk}.
\end{proof}

We return to the matrix of $H_\varphi$ acting on $\mathscr{H}^2\times\mathscr{H}^2$ from \eqref{eq:Mrho}. The matrix $M_\rho^0$ corresponding to the action of $H_\varphi$ on $\mathscr{H}^2_0\times\mathscr{H}^2_0$ is obtained from $M_\rho$ by deleting the first row and column. That is, $M^0_\rho = (\rho_{mn})_{m,n\geq 2}$ in view of \eqref{eq:Mrho}.

Now, suppose that $H_\varphi$ is a compact form, i.e. that its matrix $M$ defines a compact operator on $\ell^2$. Let  
\[\Lambda = \{\lambda_1,\,\lambda_2,\,\ldots\}\]
denote the singular value sequence of $M$. We say that $H_\varphi$ is in the Schatten class $S_p$, $0<p\leq\infty$, if $\Lambda \in \ell^p$, and we let $\|H_\varphi\|_{S_p} = \|\Lambda\|_{\ell^p}$. When speaking of a Hankel form $H_\varphi$ we will write $S_p(\mathscr{H}^2)$ or $S_p(\mathscr{H}^2_0)$ to clarify which space is being considered; using Theorem~\ref{thm:matrixemb} as in Corollary~\ref{cor:ell1}, it is easy to construct Hankel forms belonging to the latter Schatten class, but not to the former.

Helson \cite{Helson06} showed that if $H_\varphi \in S_p(\mathscr{H}^2)$ and $p=2$, then $H_\varphi$ has a bounded symbol. In \cite{BP2014}, the authors showed that this is no longer the case when 
\[p>p_0 \approx 5.738817179.\]
We will now investigate symbols for forms $H_\varphi \in S_p(\mathscr{H}^2_0)$. We start by verifying that Helson's result still holds for $S_2(\mathscr{H}^2_0)$.

As in Lemma \ref{lem:polyincl}, any bounded Hankel form on $\mathscr{H}^2_0\times\mathscr{H}^2_0$ has a symbol $\varphi$ in $(\mathscr{H}_0^2\odot\mathscr{H}_0^2)^\ast$ of the form
\[\varphi(s) = \sum_{\Omega(n)\geq2} \varrho_n n^{-s}.\]
From this fact, a computation shows that
\begin{align*}
	\|H_\varphi\|_{S_2(\mathscr{H}_0^2)}^2 &= \sum_{m=2}^\infty \sum_{n=2}^\infty |H_\varphi (m^{-s}n^{-s})|^2 \\
	&=  \sum_{\Omega(n)\geq2} \left(d(n)-2\right)|\varrho_n|^2 \asymp \sum_{\Omega(n)\geq2} d(n) |\varrho_n|^2.
\end{align*}
Here $d(n)$ denotes the number of divisors of $n$, and the final estimate follows from the fact that $d(n)-2 \geq d(n)/3$ for $n$ such that $\Omega(n) \geq 2$, seeing as $d(n)\geq\Omega(n)+1$. Hence we can use Helson's inequality 
\[\Bigg(\sum_{n=1}^\infty \frac{|a_n|^2}{d(n)}\Bigg)^\frac{1}{2}\leq \Bigg\|\sum_{n=1}^\infty a_n n^{-s}\Bigg\|_{\mathscr{H}^1}\]
to conclude that $\varphi\in(\mathscr{H}^1)^\ast$ whenever $H_\varphi \in S_2(\mathscr{H}^2_0)$. That is, $H_\varphi$ has a bounded symbol whenever $H_\varphi \in S_2(\mathscr{H}^2_0)$.  We now show that Helson's result is optimal for $S_p(\mathscr{H}^2_0)$.

\begin{thm} \label{thm:helsonopt}
	For $p > 2$ there exist Hankel forms $H_\varphi \in S_p(\mathscr{H}^2_0)$ such that no $\psi$ in $L^2(\mathbb{T}^\infty)$ satisfies $H_\varphi = H_\psi$. In particular, there exist Hankel forms $H_\varphi \in S_p(\mathscr{H}^2_0)$ for which there are no bounded symbols.
\end{thm}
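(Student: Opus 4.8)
The plan is to deduce the theorem from the matrix embedding of Theorem~\ref{thm:matrixemb} by choosing the embedded operator $C$ on $\ell^2$ so that $C \in S_p$ but $C \notin S_2$. The first step is to identify the Schatten class of $H_\varphi$ acting on $\mathscr{H}^2_0$ in terms of $C$. Recall from the proof of Theorem~\ref{thm:matrixemb} that $H_\varphi$ restricted to $\mathscr{H}^2_0$ is unitarily equivalent to $\mathcal{J}(C^T\oplus C)\oplus 0$. Since $\mathcal{J}$ is a unitary involution, a direct computation gives $\bigl(\mathcal{J}(C^T\oplus C)\bigr)^\ast\mathcal{J}(C^T\oplus C) = \overline{CC^\ast}\oplus C^\ast C$, whose nonzero eigenvalues are the squares of the singular values of $C$, each occurring with multiplicity two. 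Hence the singular values of $H_\varphi$ on $\mathscr{H}^2_0$ are precisely those of $C$, each repeated twice, so $\|H_\varphi\|_{S_p(\mathscr{H}^2_0)} = 2^{1/p}\|C\|_{S_p}$; in particular $H_\varphi \in S_p(\mathscr{H}^2_0)$ if and only if $C \in S_p$.

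Next, fix $p > 2$ and take $C$ to be the diagonal operator on $\ell^2$ with entries $c_{k,k} = k^{-\alpha}$, where $\alpha$ is chosen with $1/p < \alpha \le 1/2$; such an $\alpha$ exists precisely because $p > 2$. Then $\|C\|_{S_p}^p = \sum_{k\ge1} k^{-\alpha p} < \infty$ while $\|C\|_{S_2}^2 = \sum_{k\ge1}k^{-2\alpha} = \infty$. Letting $\varphi(s) = \sum_{k\ge1} k^{-\alpha}(p_{2k-1}p_{2k})^{-s}$ be the symbol furnished by Theorem~\ref{thm:matrixemb}, the first step gives $H_\varphi \in S_p(\mathscr{H}^2_0)$, while part (b) of that theorem shows $\varphi \notin \mathscr{H}^2$.

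It then remains to rule out a symbol in $L^2(\mathbb{T}^\infty)$. Suppose $\psi \in L^2(\mathbb{T}^\infty)$ satisfies $H_\psi = H_\varphi$ on $\mathscr{H}^2_0 \times \mathscr{H}^2_0$. Let $\phi \in \mathscr{H}^2$ be the Dirichlet series with $\mathscr{B}\phi = P\psi$, so that $H_\psi(fg) = \langle \mathscr{B}f\mathscr{B}g,\psi\rangle = \langle fg,\phi\rangle$ for all $f,g$. Testing the identity against $f = p_{2k-1}^{-s}$ and $g = p_{2k}^{-s}$ forces the coefficient of $\phi$ at $p_{2k-1}p_{2k}$ to have modulus $k^{-\alpha}$, for every $k \ge 1$. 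Therefore $\|\phi\|_{\mathscr{H}^2}^2 \ge \sum_{k\ge1}k^{-2\alpha} = \infty$, contradicting $\|\phi\|_{\mathscr{H}^2} = \|P\psi\|_{H^2(\mathbb{T}^\infty)} \le \|\psi\|_{L^2(\mathbb{T}^\infty)} < \infty$. Hence no such $\psi$ exists, and since a bounded symbol would lie in $L^\infty(\mathbb{T}^\infty) \subseteq L^2(\mathbb{T}^\infty)$, the final assertion of the theorem follows as well.

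The substance of the argument is contained in Theorem~\ref{thm:matrixemb}: the mechanism converting an arbitrary operator on $\ell^2$ into a multiplicative Hankel form on $\mathscr{H}^2_0$ while simultaneously controlling the Schatten norm of the form and allowing the $\mathscr{H}^2$-norm of the symbol to blow up. With that tool in hand, the only genuine point requiring care is that the unitary equivalence recorded there transports the singular value sequence faithfully — equivalently, that passing from the bilinear form to its matrix does not disturb the singular values — after which everything reduces to choosing a sequence in $\ell^p\setminus\ell^2$ and bookkeeping coefficients.
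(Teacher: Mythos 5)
Your proof is correct and follows essentially the same route as the paper: feed a matrix $C \in S_p \setminus S_2$ (you take a diagonal one) into the embedding of Theorem~\ref{thm:matrixemb}, read off $\|H_\varphi\|_{S_p(\mathscr{H}^2_0)}^p = 2\|C\|_{S_p}^p < \infty$ from the unitary equivalence $H_\varphi|_{\mathscr{H}^2_0} \simeq \mathcal{J}(C^T\oplus C)\oplus 0$, and note that $\|C\|_{S_2}=\infty$ forbids an $L^2(\mathbb{T}^\infty)$ symbol. Your only additions are to make explicit two steps the paper leaves implicit — that the singular values survive the passage from bilinear form to matrix, and that any $\psi\in L^2$ agreeing with $H_\varphi$ on $\mathscr{H}^2_0\times\mathscr{H}^2_0$ would force $P\psi$ to reproduce the coefficients $c_{k,k}$ — both of which are verified correctly.
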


\begin{proof}
	Let $C = \left(c_{j,k} \right)_{j,k \geq 1}$ be a matrix defining an operator on $\ell^2$ which belongs to $S_p$ but not to $S_2$. In accordance with Theorem~\ref{thm:matrixemb} let
	\begin{equation*} \label{eq:Imatrix}
		\varphi(s) = \sum_{j=1}^\infty \sum_{k=1}^\infty c_{j,k} (p_{2j-1}p_{2k})^{-s}.
	\end{equation*}
	Since, as in the proof of Theorem~\ref{thm:matrixemb}, $H_{\varphi}\big|_{\mathscr{H}^2_0} \simeq \mathcal{J}\left(C^T \oplus C\right) \oplus 0$, we have that 
	\[\|H_{\varphi}\|_{S_p(\mathscr{H}_0^2)}^p = 2\|C\|_{S_p}^p < \infty.\] 
	On the other hand, we have by assumption that 
	\[\|\varphi\|_{\mathscr{H}^2} = \|C\|_{S_2} = \infty. \qedhere\]
\end{proof}
While Theorem~\ref{thm:helsonopt} does not concern Hankel forms on $\mathscr{H}^2 \times \mathscr{H}^2$, we do consider it to give us an indication that $p=2$ might be the critical value also in this case.

\begin{conj} \label{conj:schatten}
	For every $p>2$ there exists a multiplicative Hankel form $H_\varphi$ in $S_p(\mathscr{H}^2)$ without a bounded symbol.
\end{conj}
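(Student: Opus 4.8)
The natural first attempt is to run the mechanism of Theorem~\ref{thm:helsonopt} on the full space $\mathscr{H}^2$, and it is instructive to see why this fails. For a symbol $\varphi(s)=\sum_{j,k}c_{j,k}(p_{2j-1}p_{2k})^{-s}$, part~(b) of Theorem~\ref{thm:matrixemb} gives $\|H_\varphi\|\asymp\|\varphi\|_{\mathscr{H}^2}$, so $H_\varphi$ is bounded only when $\varphi\in\mathscr{H}^2$, and then $H_\varphi\in S_2(\mathscr{H}^2)$ (the support being squarefree with a bounded number of divisors), whence a bounded symbol by Helson's theorem \cite{Helson06}. The slack exploited on $\mathscr{H}^2_0$ — that each $a_p$ and $b_p$ occurs only once in \eqref{eq:Hphifg} — disappears once the constant term is restored, since the rank-one corrections in \eqref{eq:decomp} pair against all of $\varphi$. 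So a counterexample on $\mathscr{H}^2$ cannot be $2$-homogeneous; it must be supported on integers with many prime factors, where the divisor combinatorics genuinely leaves room, and in particular it cannot be Hilbert--Schmidt (else Helson applies again).

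This points to the homogeneous regime. By Theorem~\ref{thm:mproj}, $P_m$ is a contraction on $\mathscr{H}^2\odot\mathscr{H}^2$, hence — being self-adjoint for the $\mathscr{H}^2$-pairing — a contraction on $(\mathscr{H}^2\odot\mathscr{H}^2)^\ast$ as well; recall also that $H_\varphi$ has a bounded symbol precisely when $\varphi$ lies in the proper subspace $(\mathscr{H}^1)^\ast\subsetneq(\mathscr{H}^2\odot\mathscr{H}^2)^\ast$ (strict by \cite{OCS12}). The plan is to produce, for each $p>2$, a family of $m$-homogeneous symbols $\psi_m$ for which the ratio $\|\psi_m\|_{(\mathscr{H}^1)^\ast}/\|H_{\psi_m}\|_{S_p(\mathscr{H}^2)}$ grows polynomially with $m$, and then take $\varphi=\sum_m c_m\psi_m$ with the $\psi_m$ supported on pairwise disjoint blocks of primes and $c_m$ a suitably decaying sequence; the Hankel matrix of $\varphi$ is then block-diagonal in the prime blocks up to rank-one corrections, so $\|H_\varphi\|_{S_p}^p\asymp\sum_m c_m^p\|H_{\psi_m}\|_{S_p}^p<\infty$, while $\|\varphi\|_{(\mathscr{H}^1)^\ast}\gtrsim\sup_m c_m\|\psi_m\|_{(\mathscr{H}^1)^\ast}=\infty$, using that the conditional expectations onto the individual prime blocks are contractions on $\mathscr{H}^1$ and that homogeneity of degree $\geq2$ annihilates the surviving constants. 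For symbols $\psi_m=\sum_{\Omega(n)=m}\varrho_n n^{-s}$ the Hankel matrix on $\mathscr{H}^2$ is a sum, over the support, of $\pm1$ matrices laid out along the divisor lattices of the $n$, and for random (Steinhaus or Gaussian) coefficients $\varrho_n$ on a well-chosen support its singular-value sequence can in principle be pinned into $\ell^p\setminus\ell^2$ by a moment method combined with hypercontractive and Bohnenblust--Hille type estimates on $\mathbb{T}^\infty$; this is essentially what was done in \cite{BP2014} for $p>p_0\approx5.74$, and pushing the exponent down is a matter of sharpening these eigenvalue estimates.

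The genuine obstacle is the \emph{lower} bound $\|\psi_m\|_{(\mathscr{H}^1)^\ast}\to\infty$ at the required rate. Here we are stuck: absent a Nehari theorem for multiplicative Hankel forms there is essentially no lower bound for $\|\varphi\|_{(\mathscr{H}^1)^\ast}$ beyond the trivial $\|\varphi\|_{(\mathscr{H}^1)^\ast}\geq\|H_\varphi\|$, and Helson's inequality only supplies the matching \emph{upper} bound $\|\varphi\|_{(\mathscr{H}^1)^\ast}\lesssim\|H_\varphi\|_{S_2(\mathscr{H}^2)}$ that makes $S_2$ sufficient — it is silent in the direction we need. To descend below $p_0$, let alone to $p=2$, one must either sharpen Helson's inequality within a fixed homogeneity $\Omega(n)=m$ — replacing the divisor count $d(n)$ by a weight that sees the \emph{shape} of the divisor set of $n$ rather than merely its size — or exhibit a new family of test functions $F\in\mathscr{H}^1$, most plausibly random products $F=\prod_\ell f_\ell$ of low-norm $\mathscr{H}^2$ factors, for which $|\langle F,\psi_m\rangle|$ is provably large while $\|F\|_{\mathscr{H}^1}$ stays bounded. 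This is entangled with the long-standing $\mathscr{H}^1$-embedding problem flagged in connection with \eqref{eq:hilbert}, and I expect any proof of the conjecture to resolve a quantitative local form of that problem en route. Short of such input, the realistic target is not $p=2$ but an incremental improvement of $p_0$, obtained by optimizing the construction of \cite{BP2014}; closing the gap to the critical exponent appears to require a genuinely new idea on the $\mathscr{H}^1$ side.
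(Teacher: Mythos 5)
The statement you were asked to prove is labelled as a \emph{conjecture} in the paper; the authors offer no proof, and to the best of their knowledge none exists. What the paper does establish is precisely the state of affairs you describe: Helson's theorem shows that $S_2(\mathscr{H}^2)$ forms have bounded symbols, the construction of \cite{BP2014} produces counterexamples only for $p>p_0\approx 5.74$, and Theorem~\ref{thm:helsonopt} settles the analogous question for $S_p(\mathscr{H}^2_0)$ for all $p>2$ --- which is exactly the evidence the authors cite as motivation for the conjecture. So there is nothing in the paper to compare your argument against, and your proposal, which explicitly stops short of a proof, is the honest response.

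Your analysis of the obstructions is essentially sound and consistent with the paper's own discussion. You are right that the matrix-embedding mechanism of Theorem~\ref{thm:matrixemb} cannot work on $\mathscr{H}^2$: for the $2$-homogeneous squarefree symbols in question one has $\|H_\varphi\|_{S_2(\mathscr{H}^2)}^2=\sum_n d(n)|\varrho_n|^2\asymp\|C\|_{S_2}^2$, so boundedness on $\mathscr{H}^2\times\mathscr{H}^2$ already forces membership in $S_2(\mathscr{H}^2)$ and hence a bounded symbol via Helson. You are also right that the missing ingredient is a lower bound in the $(\mathscr{H}^1)^\ast$ direction, where Helson's inequality points the wrong way; one caveat is that in the existing construction of \cite{OCS12,BP2014} a lower bound $\|\varphi_d\|_{(\mathscr{H}^1)^\ast}\geq(\pi/2)^d$ \emph{is} available (by testing against $\varphi_d$ itself), and the exponent $p_0$ arises from the competition between this and the upper estimates for $\|H_{\varphi_d}\|_{S_p}$ --- so sharpening the Schatten-norm upper bounds is an equally plausible route to lowering $p_0$ as improving the dual-norm lower bounds. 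Either way, you have correctly identified that the statement is open and why; there is no gap to report because you have not claimed a proof.
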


\section{A square function characterization of $\mathscr{H}^p$ and skew products} \label{sec:skew}
In the context of the classical Hardy spaces, it was Bourgain \cite{Bourgain86} who recalled the square function characterization of $H^p$ due to Fefferman and Stein \cite{FS72} and used it to the effect of showing that $H^2 \odot \partial H^2 \subseteq \partial H^1$, where $\partial H^p$ denotes the space consisting of the derivatives of all $H^p$-functions. In view of the fact that $\partial H^1 = \partial (H^2 \odot H^2) \subseteq H^2 \odot \partial H^2$ this immediately implies that 
\begin{equation}
	\label{eq:classicalproduct} 
	\partial^{-1}( H^2 \odot 
	\partial H^2) = H^2 \odot H^2. 
\end{equation}
In terms of bilinear forms, we can naturally associate a Hankel-type form $J_g$ to every element $g \in \left( 
\partial^{-1}( H^2 \odot 
\partial H^2)\right)^\ast$. If an additive Hankel form $H_g$ on $H^2 \times H^2$ corresponds to the matrix $\left(\hat{g}(j+k)\right)_{j,k\geq0}$, then $J_g$ has matrix 
\[\left( \frac{j+1}{j+k+1} \hat{g}(j+k) \right)_{j,k\geq0}.\] 
Hence Bourgain's lemma \eqref{eq:classicalproduct} can be equivalently rephrased to say that the map $H_g \mapsto J_g$ is bounded in operator norm. This statement actually carries greater interest than what its face value might suggest. The matrix 
\[\left(\frac{j+1}{j+k+1}\right)_{j,k\geq0}\]
is not a bounded Schur multiplier on all matrices, and hence the map $H_g \mapsto J_g$ is not completely bounded \cite{DP97}. This observation is at the heart of Pisier's \cite{Pisier97} construction of a polynomially bounded operator not similar to a contraction.

We define the skew product space $\partial^{-1}(\mathscr{H}^2 \odot \partial \mathscr{H}^2)$ as the Banach space completion of the space of Dirichlet series $F$ whose derivatives have a finite sum representation $F' = \sum_{k}f_k g_k'$, where $f_k, g_k \in \mathscr{H}^2$. The completion is taken under the norm
\[ \|F\|_{\partial^{-1}(\mathscr{H}^2 \odot \partial \mathscr{H}^2)} = |F(+\infty)| + \inf \sum_k \|f_k\|_{\mathscr{H}^2}\|g_k\|_{\mathscr{H}^2}, \]
where the infimum is computed over all finite representations. From the product rule $(fg)' = f'g + fg'$ it is clear that 
\begin{equation}
	\label{eq:skewproductcontainment} \mathscr{H}^2 \odot \mathscr{H}^2 \subseteq \partial^{-1}(\mathscr{H}^2 \odot \partial \mathscr{H}^2). 
\end{equation}

Our first goal is to establish a square function characterization of $\mathscr{H}^p$, for $0<p<\infty$, and use it to show that $\partial^{-1}(\mathscr{H}^2\odot \partial \mathscr{H}^2)\subseteq\mathscr{H}^1$. We begin by recalling that the spaces $\mathscr{H}^p$ are related to the Möbius invariant Hardy spaces in the right half-plane, $\mathbb{C}_0$, defined as
\[H^p_{\operatorname{i}}(\mathbb{C}_0) = \Bigg\{f \in \Hol(\mathbb{C}_0)\,:\, \|f\|_{H^p_{\operatorname{i}}(\mathbb{C}_0)}=\sup_{\sigma>0}\Big(\frac{1}{\pi}\int_{\mathbb{R}}|f(\sigma+it)|^p\frac{dt}{1+t^2}\Big)^\frac{1}{p}<\infty\Bigg\}.\]
Given a character $\chi \in \mathbb{T}^\infty$, we ``twist'' the Dirichlet series $f(s)=\sum_{n\geq1}a_n n^{-s}$ to obtain
\[f_\chi(s) = \sum_{n=1}^\infty a_n \chi(n)n^{-s}, \qquad \chi(n) = \chi^{\kappa(n)}.\]
We will require the following basic result, which can be extracted from Lemma~5 and Theorem~5 in \cite{Bayart02}.
\begin{lem} \label{lem:Hpi}
	Let $0<p<\infty$, and suppose that $f \in \mathscr{H}^p$. For almost every $\chi\in\mathbb{T}^\infty$, $f_\chi \in H^p_{\operatorname{i}}(\mathbb{C}_0)$. Moreover, 
	\[\|f\|_{\mathscr{H}^p} = \left(\int_{\mathbb{T}^\infty}\|f_\chi\|_{H^p_{\operatorname{i}}(\mathbb{C}_0)}^p\,dm_\infty(\chi)\right)^\frac{1}{p}.\]
\end{lem}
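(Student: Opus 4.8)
This lemma is essentially the combination of Lemma~5 and Theorem~5 of \cite{Bayart02}, and the proposal is to reproduce their argument in the notation used here. The two assertions --- that $f_\chi\in H^p_{\operatorname{i}}(\mathbb{C}_0)$ for almost every $\chi$, and the norm identity --- are most naturally proved together, first for Dirichlet polynomials, where $f_\chi$ is entire and everything is explicit, and then for a general $f\in\mathscr{H}^p$ by the density of the Dirichlet polynomials $\mathscr{P}$ in $\mathscr{H}^p$.

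For a Dirichlet polynomial $f(s)=\sum_{n\leq N}a_nn^{-s}$ the heart of the matter is a measure-preserving rotation of $\mathbb{T}^\infty$. Writing $\omega_t=(p_j^{-it})_{j\geq1}\in\mathbb{T}^\infty$, one has $\chi(n)n^{-it}=(\chi\omega_t)(n)$, hence $f_\chi(\sigma+it)=f_{\chi\omega_t}(\sigma)$ for all $\sigma>0$ and all $t\in\mathbb{R}$. Since $\chi\mapsto\chi\omega_t$ preserves $m_\infty$, and since $\int_{\mathbb{T}^\infty}|f_\chi(\sigma)|^p\,dm_\infty(\chi)=\|f(\cdot+\sigma)\|_{\mathscr{H}^p}^p$ by the definition of the $\mathscr{H}^p$-norm, Tonelli's theorem and the fact that $dt/(\pi(1+t^2))$ is a probability measure on $\mathbb{R}$ give
\[\int_{\mathbb{T}^\infty}\frac{1}{\pi}\int_{\mathbb{R}}|f_\chi(\sigma+it)|^p\,\frac{dt}{1+t^2}\,dm_\infty(\chi)=\|f(\cdot+\sigma)\|_{\mathscr{H}^p}^p,\qquad\sigma>0.\]
Now I let $\sigma\to0^+$. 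On the right, $\|f(\cdot+\sigma)\|_{\mathscr{H}^p}$ is non-increasing in $\sigma$ and converges to $\|f\|_{\mathscr{H}^p}$, because $\mathscr{B}(f(\cdot+\sigma))$ is $\mathscr{B}f$ composed with a contraction of the polydisc and $|\cdot|^p$ is subharmonic. On the left, the inner integral tends to $\|f_\chi\|_{H^p_{\operatorname{i}}(\mathbb{C}_0)}^p$ as $\sigma\to0^+$, and for a Dirichlet polynomial $|f_\chi(\sigma+it)|^p$ is bounded uniformly in $\sigma\in(0,1]$ and in $t,\chi$; so dominated convergence over $\mathbb{T}^\infty$ yields the asserted identity for $f\in\mathscr{P}$, and in particular $f_\chi\in H^p_{\operatorname{i}}(\mathbb{C}_0)$ for every $\chi$ in this case.

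For a general $f\in\mathscr{H}^p$, choose polynomials $f_k\to f$ in $\mathscr{H}^p$. Applying the polynomial identity to $f_k-f_\ell$ shows that $\chi\mapsto\|(f_k)_\chi-(f_\ell)_\chi\|_{H^p_{\operatorname{i}}(\mathbb{C}_0)}$ is a Cauchy sequence in $L^p(\mathbb{T}^\infty)$; passing to a subsequence, $((f_k)_\chi)_k$ converges in $H^p_{\operatorname{i}}(\mathbb{C}_0)$ for almost every $\chi$ to some $g_\chi$, with $\int_{\mathbb{T}^\infty}\|g_\chi\|_{H^p_{\operatorname{i}}(\mathbb{C}_0)}^p\,dm_\infty(\chi)=\lim_k\|f_k\|_{\mathscr{H}^p}^p=\|f\|_{\mathscr{H}^p}^p$. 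Since twisting is an isometry of $\mathscr{H}^p$ and point evaluations at points of $\{\mre s>1/2\}$ are bounded on $\mathscr{H}^p$, we also have $(f_k)_\chi\to f_\chi$ uniformly on compact subsets of $\{\mre s>1/2\}$; as convergence in $H^p_{\operatorname{i}}(\mathbb{C}_0)$ is locally uniform on $\mathbb{C}_0$, the function $g_\chi$ must be the holomorphic extension of $f_\chi$ to $\mathbb{C}_0$, which completes the proof.

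The step I expect to be the main obstacle, and the one for which one genuinely leans on the analysis of \cite{Bayart02} rather than on a soft argument, is the assertion implicit above that for almost every $\chi$ the Dirichlet series defining $f_\chi$ --- which a priori converges only on $\{\mre s>1/2\}$ --- extends holomorphically to all of $\mathbb{C}_0=\{\mre s>0\}$ and lies in $H^p_{\operatorname{i}}(\mathbb{C}_0)$. In the scheme above this is extracted from the $L^p$-completeness of the fibers together with the uniform bound $\int_{\mathbb{T}^\infty}|f_\chi(\sigma)|^p\,dm_\infty(\chi)\leq\|f\|_{\mathscr{H}^p}^p$; alternatively, following Lemma~5 of \cite{Bayart02}, one realizes $f_\chi$ as a vertical limit of the translates $f(\cdot+i\tau_n)$ and invokes Montel's theorem on each half-plane $\{\mre s>\varepsilon\}$. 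Either way, this is where the measure-theoretic content of the lemma resides; the rotation identity and the passage to the boundary are comparatively routine.
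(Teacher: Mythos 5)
Your argument is correct and takes essentially the approach the paper intends: the paper gives no proof of this lemma, deferring entirely to Lemma~5 and Theorem~5 of \cite{Bayart02} (with only the remark that the arguments there, stated for $p\geq 1$, extend to all $0<p<\infty$), and your reconstruction --- the measure-preserving rotation $\chi\mapsto\chi\omega_t$ combined with Fubini for Dirichlet polynomials, followed by density of $\mathscr{P}$ and completeness of the fibers $H^p_{\operatorname{i}}(\mathbb{C}_0)$ to handle general $f$ and the a.e.\ analytic extension past $\mre s=1/2$ --- is precisely the argument being cited. The one step worth making explicit is that $\sup_{\sigma>0}$ in the definition of $\|\cdot\|_{H^p_{\operatorname{i}}(\mathbb{C}_0)}$ coincides with $\lim_{\sigma\to 0^+}$ of the weighted vertical-line integrals (a standard least-harmonic-majorant fact), which your passage to the boundary quietly uses.
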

\begin{rem}
	The results in \cite{Bayart02} are stated only for $p\geq1$, but the same arguments lead to our statement of  Lemma~\ref{lem:Hpi}.
\end{rem}
For $\tau \in \mathbb{R}$, let $\Gamma_\tau$ be the cone 
\begin{equation*}
	\Gamma_\tau = \{ \sigma + i t \, : \, |t - \tau | < \sigma\} 
\end{equation*}
in the right half-plane $\mathbb{C}_0$, with vertex at $i\tau$. For a holomorphic function $f$ in $\mathbb{C}_0$, let $Sf$ be the square function, or the Lusin area integral, 
\begin{equation*}
	Sf(\tau) = \left(\int_{\Gamma_\tau} |f'(\sigma + it)|^2 \, d\sigma \, dt \right)^{1/2}, \qquad \tau \in \mathbb{R}, 
\end{equation*}
and let $f^\ast$ denote the non-tangential maximal function
\begin{equation*}
	f^\ast(\tau) = \sup_{s \in \Gamma_\tau} |f(s)|, \qquad \tau \in \mathbb{R}.
\end{equation*}
Since $1/(1+\tau^2)$ is a Muckenhoupt $A_q$-weight for all $q > 1$, it follows from Gundy and Wheeden \cite{GW7374} that $f \in H^p_{\operatorname{i}}(\mathbb{C}_0)$ if and only if $f^\ast \in L^p_{\operatorname{i}}(\mathbb{R}) = L^p \left((1+\tau^2)^{-1} \, d\tau\right)$, for $0 < p < \infty$, with comparable norms. Furthermore, if $\lim_{\sigma \to \infty} f(\sigma + it) = 0$, then 
\begin{equation}
	\label{eq:sqfunctionclassical} \|f^\ast\|_{L^p_{\operatorname{i}}(\mathbb{R})} \asymp \| Sf \|_{L^p_{\operatorname{i}}(\mathbb{R})}. 
\end{equation}
This gives us a norm expression for functions in $\mathscr{H}^p$ in terms of the square function.
\begin{thm}
	\label{thm:sqfcn} Let $f(s) = \sum_{n\geq1} a_n n^{-s}$. Then for any $0 < p < \infty$, we have 
	\begin{align}
		\label{eq:sqfcn} 
		\begin{split}
			\|f\|_{\mathscr{H}^p}^p &\asymp |a_1|^p + \int_{\mathbb{T}^\infty} \|S(f_\chi)\|^p_{L^p_{\operatorname{i}}(\mathbb{R})} \,dm_\infty(\chi)\\
			&= |a_1|^p + \int_{\mathbb{T}^\infty} \int_{\mathbb{R}}\left(\int_{\Gamma_\tau} |f_\chi'( \sigma + it)|^2 \, d\sigma \, dt \right)^{p/2} \, \frac{d\tau}{1+\tau^2} \,dm_\infty(\chi). 
		\end{split}
	\end{align}
\end{thm}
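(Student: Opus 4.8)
The plan is to transfer the classical square function theory for the Möbius-invariant half-plane Hardy spaces $H^p_{\operatorname{i}}(\mathbb{C}_0)$, recalled just before the statement, to the Dirichlet series setting via the twisting identity of Lemma~\ref{lem:Hpi}, and then to account for the constant term $a_1$ by hand. Fix $f(s) = \sum_{n\geq1} a_n n^{-s} \in \mathscr{H}^p$. Since $(f_\chi)' = (f')_\chi$, the square function $S(f_\chi)$ depends only on $f'$; writing $g_\chi := f_\chi - a_1$ we thus have $S(f_\chi) = S(g_\chi)$. By Lemma~\ref{lem:Hpi}, $f_\chi \in H^p_{\operatorname{i}}(\mathbb{C}_0)$ for almost every $\chi \in \mathbb{T}^\infty$, and since constants lie in $H^p_{\operatorname{i}}(\mathbb{C}_0)$ with $\|c\|_{H^p_{\operatorname{i}}(\mathbb{C}_0)} = |c|$, the same holds for $g_\chi$.

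The heart of the matter is the pointwise (in $\chi$) estimate
\[ \|f_\chi\|_{H^p_{\operatorname{i}}(\mathbb{C}_0)}^p \asymp |a_1|^p + \|S(f_\chi)\|_{L^p_{\operatorname{i}}(\mathbb{R})}^p, \]
with implied constants depending only on $p$; granting it, one integrates over $\chi\in\mathbb{T}^\infty$, uses Lemma~\ref{lem:Hpi} on the left-hand side and $\int_{\mathbb{T}^\infty} |a_1|^p\, dm_\infty(\chi) = |a_1|^p$ on the right, and obtains \eqref{eq:sqfcn}. To prove the pointwise estimate I would proceed in three steps. First, from a crude bound on the Dirichlet coefficients of an $\mathscr{H}^p$-function one has $f_\chi(\sigma + it) \to a_1$ as $\sigma \to \infty$, equivalently $g_\chi(\sigma+it)\to 0$; consequently point evaluation at $+\infty$ is controlled, since for each $\tau$ the cone $\Gamma_\tau$ contains $\sigma + i\tau$ for every $\sigma > 0$, so $f_\chi^\ast(\tau) \geq \sup_{\sigma>0}|f_\chi(\sigma+i\tau)| \geq |a_1|$, and hence $|a_1|^p \leq \|f_\chi^\ast\|_{L^p_{\operatorname{i}}(\mathbb{R})}^p \asymp \|f_\chi\|_{H^p_{\operatorname{i}}(\mathbb{C}_0)}^p$ by the Gundy--Wheeden equivalence. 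Second, combining this with the elementary bound $(x+y)^p \asymp x^p + y^p$ for $x,y\geq0$, applied to $f_\chi = a_1 + g_\chi$ and to $g_\chi = f_\chi - a_1$, gives $\|f_\chi\|_{H^p_{\operatorname{i}}(\mathbb{C}_0)}^p \asymp |a_1|^p + \|g_\chi\|_{H^p_{\operatorname{i}}(\mathbb{C}_0)}^p$. Third, since $g_\chi(\sigma+it)\to 0$, the relation \eqref{eq:sqfunctionclassical} applies to $g_\chi$, and using Gundy--Wheeden once more, $\|g_\chi\|_{H^p_{\operatorname{i}}(\mathbb{C}_0)}^p \asymp \|g_\chi^\ast\|_{L^p_{\operatorname{i}}(\mathbb{R})}^p \asymp \|S(g_\chi)\|_{L^p_{\operatorname{i}}(\mathbb{R})}^p = \|S(f_\chi)\|_{L^p_{\operatorname{i}}(\mathbb{R})}^p$. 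Chaining the last two displays yields the claim.

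The remaining points are routine. The measurability of $\chi \mapsto \|S(f_\chi)\|_{L^p_{\operatorname{i}}(\mathbb{R})}^p$ needed before integrating follows from joint measurability of $(\chi,\sigma,t)\mapsto f_\chi'(\sigma+it)$ together with Tonelli's theorem. The constants are uniform in $\chi$ precisely because the only estimates invoked are Gundy--Wheeden and \eqref{eq:sqfunctionclassical}, whose constants depend only on $p$ and on the fixed $A_q$-characteristic of the weight $(1+\tau^2)^{-1}$. I do not anticipate a genuine obstacle: the theorem is essentially a transference result, and the only step that truly demands attention is the separation of the constant term — the square function $S(f_\chi)$ cannot see $a_1$ at all, which is exactly what forces the isolated $|a_1|^p$ summand and the detour through $g_\chi = f_\chi - a_1$, and one must remember to certify that $g_\chi$ vanishes at $+\infty$ so that the classical equivalence \eqref{eq:sqfunctionclassical} may legitimately be applied.
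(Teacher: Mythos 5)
Your proof is correct and follows essentially the same route as the paper: transfer the classical equivalences (Gundy--Wheeden and \eqref{eq:sqfunctionclassical}) through Lemma~\ref{lem:Hpi}, treating the constant term separately since $S(f_\chi)$ cannot see it. The only difference is minor: the paper isolates $a_1$ by citing the boundedness of $f\mapsto a_1$ on $\mathscr{H}^p$ (Cole--Gamelin) to complement $\mathscr{H}^p_0$ in $\mathscr{H}^p$, whereas you recover the needed bound $|a_1|^p\ll\|f_\chi\|^p_{H^p_{\operatorname{i}}(\mathbb{C}_0)}$ pointwise in $\chi$ from the non-tangential maximal function, a self-contained way of obtaining the same complementation.
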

\begin{proof}
	In view of \eqref{eq:sqfunctionclassical} and Lemma~\ref{lem:Hpi} we obtain \eqref{eq:sqfcn} for $f$ with constant term $a_1 = 0$, that is, for $f \in \mathscr{H}^p_0$. Note that the linear functional $f \mapsto a_1$ is bounded on $\mathscr{H}^p$, corresponding to the functional $\mathscr{B}f \mapsto \mathscr{B}f(0)$ on $H^p(\mathbb{T}^\infty)$ \cite{CG86}. Hence, the closed subspace $\mathscr{H}_0^p$ is complemented in $\mathscr{H}^p$ by $\mathbb{C}$, and $\eqref{eq:sqfcn}$ follows in general for $f \in \mathscr{H}^p$, with one side being finite if and only if the other is.
\end{proof}
\begin{cor}
	\label{cor:skewprod} $\partial^{-1}(\mathscr{H}^2 \odot \partial \mathscr{H}^2) \subseteq \mathscr{H}^1$. 
\end{cor}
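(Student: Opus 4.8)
The plan is to transfer Bourgain's classical argument \cite{Bourgain86} to the Dirichlet series setting, combining the square function characterization of Theorem~\ref{thm:sqfcn} with the twisting device and working fibrewise over $\mathbb{T}^\infty$. Since the Dirichlet series $F$ whose derivatives admit a finite representation $F' = \sum_k f_k g_k'$ with $f_k, g_k \in \mathscr{H}^2$ are, by definition, dense in $\partial^{-1}(\mathscr{H}^2\odot\partial\mathscr{H}^2)$, and $\mathscr{H}^1$ is complete, it suffices to prove $\|F\|_{\mathscr{H}^1} \ll |F(+\infty)| + \sum_k \|f_k\|_{\mathscr{H}^2}\|g_k\|_{\mathscr{H}^2}$ for every such finite representation and then pass to the infimum. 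First I would make two harmless reductions (after discarding terms with $f_k g_k'=0$): replacing $g_k$ by $g_k - g_k(+\infty)$ affects neither $g_k'$ nor $f_k g_k'$ and can only decrease $\|g_k\|_{\mathscr{H}^2}$, so one may assume $g_k \in \mathscr{H}^2_0$; and replacing $(f_k,g_k)$ by $(t_k f_k, t_k^{-1} g_k)$ with $t_k = (\|g_k\|_{\mathscr{H}^2}/\|f_k\|_{\mathscr{H}^2})^{1/2}$ leaves both $f_k g_k'$ and $\sum_k \|f_k\|_{\mathscr{H}^2}\|g_k\|_{\mathscr{H}^2}$ unchanged while forcing $\|f_k\|_{\mathscr{H}^2} = \|g_k\|_{\mathscr{H}^2}$, so that afterward $\sum_k \|f_k\|_{\mathscr{H}^2}\|g_k\|_{\mathscr{H}^2} = \big(\sum_k \|f_k\|_{\mathscr{H}^2}^2\big)^{1/2}\big(\sum_k \|g_k\|_{\mathscr{H}^2}^2\big)^{1/2}$.

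With these in place, I would apply Theorem~\ref{thm:sqfcn} with $p=1$, reducing the problem to bounding $\int_{\mathbb{T}^\infty}\|S(F_\chi)\|_{L^1_{\operatorname{i}}(\mathbb{R})}\,dm_\infty(\chi)$, the $|F(+\infty)|$ contribution already being of the desired form. Since twisting respects products and derivatives, $F_\chi' = \sum_k (f_k)_\chi\,((g_k)_\chi)'$ for every $\chi$. Fixing $\chi$ and $\tau$, and writing $s=\sigma+it$ for a point of $\Gamma_\tau$, Cauchy--Schwarz in the index $k$ gives $|F_\chi'(s)|^2 \le \big(\sum_k |(f_k)_\chi(s)|^2\big)\big(\sum_k |((g_k)_\chi)'(s)|^2\big)$; bounding $|(f_k)_\chi(s)| \le (f_k)_\chi^\ast(\tau)$ throughout $\Gamma_\tau$ and pulling the resulting constant out of the integral over $\Gamma_\tau$ yields the pointwise estimate $S(F_\chi)(\tau)^2 \le \big(\sum_k (f_k)_\chi^\ast(\tau)^2\big)\big(\sum_k S((g_k)_\chi)(\tau)^2\big)$. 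Taking square roots, integrating against $(1+\tau^2)^{-1}\,d\tau$ with Cauchy--Schwarz, and then integrating over $\mathbb{T}^\infty$ with Cauchy--Schwarz once more, I would arrive at
\[ \int_{\mathbb{T}^\infty}\|S(F_\chi)\|_{L^1_{\operatorname{i}}(\mathbb{R})}\,dm_\infty(\chi) \le \Big(\sum_k \int_{\mathbb{T}^\infty}\big\|(f_k)_\chi^\ast\big\|_{L^2_{\operatorname{i}}(\mathbb{R})}^2\,dm_\infty\Big)^{1/2}\Big(\sum_k \int_{\mathbb{T}^\infty}\big\|S((g_k)_\chi)\big\|_{L^2_{\operatorname{i}}(\mathbb{R})}^2\,dm_\infty\Big)^{1/2}. \]
Finally I would identify the two factors: by the Gundy--Wheeden equivalence $\|h^\ast\|_{L^2_{\operatorname{i}}(\mathbb{R})} \asymp \|h\|_{H^2_{\operatorname{i}}(\mathbb{C}_0)}$ and Lemma~\ref{lem:Hpi}, the first is $\asymp \big(\sum_k \|f_k\|_{\mathscr{H}^2}^2\big)^{1/2}$; and since each $g_k \in \mathscr{H}^2_0$ makes $(g_k)_\chi$ vanish at $+\infty$, the identity \eqref{eq:sqfunctionclassical} together with Gundy--Wheeden and Lemma~\ref{lem:Hpi} makes the second $\asymp \big(\sum_k \|g_k\|_{\mathscr{H}^2}^2\big)^{1/2}$. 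By the normalization the product of these is $\asymp \sum_k \|f_k\|_{\mathscr{H}^2}\|g_k\|_{\mathscr{H}^2}$, and combining with Theorem~\ref{thm:sqfcn} yields $\|F\|_{\mathscr{H}^1} \ll |F(+\infty)| + \sum_k \|f_k\|_{\mathscr{H}^2}\|g_k\|_{\mathscr{H}^2}$; taking the infimum over representations completes the argument.

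Essentially all of the real difficulty is already absorbed into Theorem~\ref{thm:sqfcn}; granting it, what remains is bookkeeping, and I expect the delicate points to be: the normalization that collapses the product of two $\ell^2$-sums back into $\sum_k \|f_k\|_{\mathscr{H}^2}\|g_k\|_{\mathscr{H}^2}$ rather than leaving the weaker Cauchy--Schwarz bound; the reduction to $g_k \in \mathscr{H}^2_0$, which is what licenses the use of \eqref{eq:sqfunctionclassical} for the twisted square functions $S((g_k)_\chi)$ and requires noting that the constants in that equivalence are uniform over $\chi$; and keeping track of the constant term $F(+\infty)$, which corresponds precisely to the $|a_1|$ term in Theorem~\ref{thm:sqfcn} and is already built into the skew product norm.
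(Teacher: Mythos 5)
Your proof is correct and follows essentially the same route as the paper: the square function characterization of Theorem~\ref{thm:sqfcn} with $p=1$, the bound $|(f_k)_\chi|\le (f_k)_\chi^\ast(\tau)$ on the cone, and Cauchy--Schwarz in $\tau$ and $\chi$, with \eqref{eq:sqfunctionclassical} applied to the $g_k$ after reducing to $g_k(+\infty)=0$. The only (immaterial) difference is organizational: the paper proves the estimate for a single product $F'=fg'$ and sums by the triangle inequality, whereas you treat the whole sum at once via Cauchy--Schwarz in $k$ plus the normalization $\|f_k\|_{\mathscr{H}^2}=\|g_k\|_{\mathscr{H}^2}$.
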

\begin{proof}
	Suppose that $f, g \in \mathscr{H}^2$, and that $F$ is the Dirichlet series such that $F' = fg'$ with $F(+\infty) = 0$. Since $\|g - g(+\infty)\|_{\mathscr{H}^2} \leq \|g\|_{\mathscr{H}^2}$ it is for the purpose of proving the statement justified to assume that $g(+\infty) = 0$. We then have that
	\begin{align*}
		\|F\|_{\mathscr{H}^1} &\asymp \int_{\mathbb{T}^\infty} \int_{\mathbb{R}}\left(\int_{\Gamma_\tau} |f_\chi( \sigma + it)|^2 |g'_\chi(\sigma +it)|^2 \, d\sigma \, dt \right)^{1/2} \, \frac{d\tau}{1+\tau^2} \,dm_\infty(\chi) \\
		&\leq \int_{\mathbb{T}^\infty} \int_{\mathbb{R}} (f_\chi)^\ast(\tau) \left(\int_{\Gamma_\tau} |g'_\chi(\sigma +it)|^2 \, d\sigma \, dt \right)^{1/2} \, \frac{d\tau}{1+\tau^2} \,dm_\infty(\chi) \\
		&\leq \int_{\mathbb{T}^\infty} \|(f_\chi)^\ast\|_{L^2_{\operatorname{i}}(\mathbb{R})} \left( \int_{\mathbb{R}} \int_{\Gamma_\tau} |g'_\chi(\sigma +it)|^2 \, d\sigma \, dt \, \frac{d\tau}{1+\tau^2} \right)^{1/2} \,dm_\infty(\chi) \\
		&\asymp \int_{\mathbb{T}^\infty} \|f_\chi\|_{H^2_{\operatorname{i}}(\mathbb{C}_0)} \|g_\chi\|_{H^2_{\operatorname{i}}(\mathbb{C}_0)} \,dm_\infty(\chi) \leq \|f\|_{\mathscr{H}^2}\|g\|_{\mathscr{H}^2}. 
	\end{align*}
	This proves that $\partial^{-1}(\mathscr{H}^2 \odot \partial \mathscr{H}^2) \subseteq \mathscr{H}^1$. 
\end{proof}

Before proceeding, we give a few remarks on the application of Theorem \ref{thm:sqfcn} to the Hardy space $H^p(\mathbb{T}^d)$ of a finite-dimensional polydisc, $d < \infty$. Let $D$ denote the differentiation operator on Dirichlet series,
\[Df(s) = \partial f(s) = f'(s) = -\sum_{n=2}^\infty a_n \log(n) n^{-s}. \]
Consider a series $f$ such that $\mathscr{B}f \in H^2(\mathbb{T}^d)$, i.e. such that $a_n = 0$ if $p_j | n$ for some $j > d$. Identifying $p_j$ with the $j$th complex variable $z_j$, the differentiation operator $D$ in the usual polydisc notation has the form
\begin{equation} \label{eq:Ddef}
	D\mathscr{B}f(z_1, \ldots, z_d) = -\sum_{j=1}^d \log(p_j) z_j \partial_{z_j} \mathscr{B}f(z_1, \ldots, z_d).
\end{equation}
Hence Theorem \ref{thm:sqfcn} gives us a new type of square function characterization of $H^p(\mathbb{T}^d)$, in terms of the differentiation operator $D$. In analogy with Corollary \ref{cor:skewprod} it can be used to prove that 
\[D^{-1}\left(H^2(\mathbb{T}^d) \odot D H^2(\mathbb{T}^d)\right) \subseteq H^1(\mathbb{T}^d)\]
and by the characterization of $H^1(\mathbb{T}^d)$ due to Ferguson--Lacey \cite{FL02} and Lacey--Terwilleger \cite{LT09} we conclude that in the finite polydisc we have
\begin{equation} \label{eq:nonradialdiff}
D^{-1}\left(H^2(\mathbb{T}^d) \odot D H^2(\mathbb{T}^d)\right) = H^2(\mathbb{T}^d) \odot H^2(\mathbb{T}^d) = H^1(\mathbb{T}^d). 
\end{equation}
It should be objected, however, that the weighted differentiation operator $D$ might not be natural in the setting of the polydisc. In Section \ref{sec:radial} we shall consider the constructs of the present section for the infinite polydisc, using the radial differentiation operator instead of $D$, and in the process prove that \eqref{eq:nonradialdiff} is valid also for radial differentiation and integration.

We return to the discussion of products of Dirichlet series spaces, and note that Corollary~\ref{cor:skewprod} in combination with \eqref{eq:skewproductcontainment} yields that
\begin{equation} \label{eq:productincl2}
\mathscr{H}^2\odot\mathscr{H}^2 \subseteq \partial^{-1}\left(\partial\mathscr{H}^2\odot\mathscr{H}^2\right) \subseteq \mathscr{H}^1.
\end{equation}
The remainder of this section is devoted to the investigation of whether these inclusions are strict. We begin with the following observation.
\begin{lem}
	\label{lem:pisiercond} Let $\varphi(s) = \sum_{k\geq1} \overline{\rho_k} k^{-s}$ be a function in $\mathscr{H}^2$. Then $\varphi$ induces a bounded linear functional $\upsilon_\varphi$ on $\partial^{-1}(\mathscr{H}^2 \odot \partial \mathscr{H}^2)$, via the $\mathscr{H}^2$-pairing, if and only if the form 
	\begin{equation}
		\label{eq:skewform} J_\varphi(a,b) = \sum_{m=1}^\infty \sum_{n=1}^\infty a_m b_n \frac{\log n}{\log m + \log n} \rho_{mn} 
	\end{equation}
	is bounded on $\ell^2 \times \ell^2$, where the summand is understood to be $0$ if $m=n=1$. The corresponding norms are equivalent,
	\[\|\upsilon_\varphi\| \asymp |\rho_1| + \|J_\varphi\|.\]
	In particular, if $\rho_k \geq 0$ for all $k$, then $\varphi \in \left( 
	\partial^{-1}(\mathscr{H}^2 \odot 
	\partial \mathscr{H}^2) \right)^*$ if and only if $\varphi \in \left( \mathscr{H}^2 \odot \mathscr{H}^2 \right)^*$, with equivalent norms. 
\end{lem}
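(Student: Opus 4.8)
The plan is to recast the $\mathscr{H}^2$-pairing against $\varphi$ as a pairing of \emph{derivatives} against a fixed primitive of $\varphi$, so that the bilinear form in question turns out to be exactly $J_\varphi$. Introduce
\[\Psi(s) = -\sum_{k=2}^\infty \frac{\overline{\rho_k}}{\log k}\,k^{-s},\]
which lies in $\mathscr{H}^2$ because $1/\log k \leq 1/\log 2$ for $k \geq 2$ and $\varphi \in \mathscr{H}^2$; note that $\Psi(+\infty) = 0$ and $\Psi' = \varphi - \overline{\rho_1}$. For a Dirichlet polynomial $F = \sum_n C_n n^{-s}$, comparing coefficients gives $\langle F, \varphi \rangle = C_1 \rho_1 + \langle F', \Psi \rangle$. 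Moreover, for Dirichlet polynomials $f = \sum_m a_m m^{-s}$ and $g = \sum_n b_n n^{-s}$, a direct coefficient computation using $\log(mn) = \log m + \log n$ shows that $\langle f g', \Psi \rangle = J_\varphi(a,b)$, the $m=n=1$ term being automatically absent since $g'$ has no constant term. Combining these, whenever $F'$ admits a finite representation $F' = \sum_k f_k g_k'$ with $f_k, g_k$ Dirichlet polynomials,
\[\langle F, \varphi \rangle = F(+\infty)\rho_1 + \sum_k J_\varphi(a_k, b_k),\]
where $a_k, b_k$ are the coefficient sequences of $f_k, g_k$.

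With this identity in hand, the norm equivalence $\|\upsilon_\varphi\| \asymp |\rho_1| + \|J_\varphi\|$ follows in the standard way. For the lower bound, testing $\upsilon_\varphi$ on $F = 1$ gives $|\rho_1| \leq \|\upsilon_\varphi\|$, while testing on $F$ with $F' = f g'$ a single product and $F(+\infty) = 0$ gives $|J_\varphi(a,b)| \leq \|\upsilon_\varphi\| \,\|f\|_{\mathscr{H}^2}\|g\|_{\mathscr{H}^2}$, i.e.\ $\|J_\varphi\| \leq \|\upsilon_\varphi\|$. For the upper bound, the triangle inequality in the displayed identity, followed by taking the infimum over representations of $F'$, yields $|\langle F, \varphi \rangle| \leq |\rho_1|\,|F(+\infty)| + \|J_\varphi\|\big(\|F\| - |F(+\infty)|\big) \leq (|\rho_1| + \|J_\varphi\|)\|F\|$. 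One preliminary point has to be handled here: it suffices to test $\upsilon_\varphi$ on Dirichlet polynomials, and for a Dirichlet polynomial $F$ the skew-product norm equals $|F(+\infty)|$ plus the infimum of $\sum_k \|f_k\|_{\mathscr{H}^2}\|g_k\|_{\mathscr{H}^2}$ taken over representations $F' = \sum_k f_k g_k'$ with \emph{polynomial} $f_k, g_k$. This is a routine truncation argument, but it is what legitimizes using the identity above (valid only for polynomial representations) to estimate $\|\upsilon_\varphi\|$.

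For the final statement, one implication requires no positivity: since $\frac{\log n}{\log m + \log n} + \frac{\log m}{\log m + \log n} = 1$ for $(m,n) \neq (1,1)$ and $\rho_{mn} = \rho_{nm}$, symmetrizing $J_\varphi$ gives $H_\varphi(a,b) = J_\varphi(a,b) + J_\varphi(b,a) + a_1 b_1 \rho_1$, so $\|H_\varphi\| \leq 2\|J_\varphi\| + |\rho_1|$; thus $\varphi \in (\partial^{-1}(\mathscr{H}^2 \odot \partial \mathscr{H}^2))^\ast$ always implies $\varphi \in (\mathscr{H}^2 \odot \mathscr{H}^2)^\ast$, with $\|\varphi\|_{(\mathscr{H}^2\odot\mathscr{H}^2)^\ast} \ll \|\upsilon_\varphi\|$. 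For the converse, assume $\rho_k \geq 0$ for all $k$. Then the matrix of $J_\varphi$, with entries $\frac{\log n}{\log m + \log n}\rho_{mn}$ (and $0$ when $m=n=1$), has non-negative entries and is dominated entrywise by the matrix $(\rho_{mn})_{m,n\geq1}$ of $H_\varphi$. Since the operator norm of a matrix $M$ with non-negative entries is attained as a supremum of $\langle M a, b\rangle$ over non-negative unit vectors — because $|\langle M a, b\rangle| \leq \langle M|a|, |b|\rangle$ — entrywise domination passes to the operator norm, giving $\|J_\varphi\| \leq \|H_\varphi\|$. Together with the first part and the trivial estimate $|\rho_1| \leq \|H_\varphi\|$ (it is a diagonal matrix entry of $H_\varphi$), this yields $\|\upsilon_\varphi\| \asymp \|\varphi\|_{(\mathscr{H}^2\odot\mathscr{H}^2)^\ast}$.

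The conceptual core is thus the choice of the primitive $\Psi$, which converts the pairing against $\varphi$ into $J_\varphi$ evaluated on derivatives, together with the symmetrization recovering $H_\varphi$ from $J_\varphi$; and, for the positive case, the fact that entrywise domination implies operator-norm domination for non-negative matrices. This last point is exactly where positivity of $\rho$ is essential, since the Schur multiplier $\big(\frac{\log n}{\log m + \log n}\big)_{m,n}$ is \emph{not} a bounded Schur multiplier on all matrices. The main technicality, rather than a genuine obstacle, is the density reduction to Dirichlet polynomials and the identification of the skew-product norm of a polynomial via polynomial representations of its derivative; I expect that to be the fiddliest part to write out carefully.
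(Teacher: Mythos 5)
Your proof is correct and takes essentially the same approach as the paper: compute $\langle \partial^{-1}(fg'),\varphi\rangle = J_\varphi(a,b)$ (your explicit primitive $\Psi$ just makes this coefficient identity transparent), and in the positive case dominate $J_\varphi$ entrywise by the Hankel form $H_\varphi$. The only cosmetic difference is that you get the unconditional direction $\|H_\varphi\| \ll |\rho_1| + \|J_\varphi\|$ by symmetrizing $J_\varphi$, where the paper simply invokes the containment $\mathscr{H}^2\odot\mathscr{H}^2 \subseteq \partial^{-1}(\mathscr{H}^2\odot\partial\mathscr{H}^2)$; both work, and your write-up is in fact more explicit than the paper's about the density/truncation point.
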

\begin{proof}
	Suppose that $f$ and $g$ are Dirichlet series with coefficient sequences $a$ and $b$, respectively. Let $\partial^{-1} (f'g)$ denote the primitive of $f'g$ with constant term $0$. Then
	\[\langle \partial^{-1} (f'g), \varphi \rangle = \sum_{m=1}^\infty \sum_{n=1}^\infty a_m b_n \frac{\log n}{\log m + \log n} \rho_{mn}, \]
	proving the first part of the proposition. For the second part, note as per usual that the action of $\varphi$ as an element in $\left( \mathscr{H}^2 \odot \mathscr{H}^2 \right)^*$ corresponds to the multiplicative Hankel form 
	\begin{equation}
		\label{eq:multhankform} H_\varphi(a,b) = \sum_{m=1}^\infty \sum_{n=1}^\infty a_m b_n \rho_{mn}. 
	\end{equation}
	Hence, if $\rho_k \geq 0$ for all $k$, then 
	\begin{equation*}
		\|\varphi\|_{\left( 
		\partial^{-1}(\mathscr{H}^2 \odot 
		\partial \mathscr{H}^2) \right)^*} \ll \|\varphi\|_{ \left( \mathscr{H}^2 \odot \mathscr{H}^2 \right)^*}. 
	\end{equation*}
	The converse inequality is a direct consequence of \eqref{eq:skewproductcontainment}. 
\end{proof}

Ortega-Cerd\'a and Seip \cite{OCS12} showed that $\mathscr{H}^2 \odot \mathscr{H}^2 \subsetneq \mathscr{H}^1$. With Lemma~\ref{lem:pisiercond}, we are able to apply their technique to prove the corresponding statement for $\partial^{-1}(\mathscr{H}^2 \odot \partial \mathscr{H}^2)$. 
\begin{thm}
	$\partial^{-1}(\mathscr{H}^2 \odot \partial \mathscr{H}^2) \subsetneq \mathscr{H}^1$ 
\end{thm}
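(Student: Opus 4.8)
The plan is to follow the Ortega-Cerd\'a--Seip strategy for proving $\mathscr{H}^2 \odot \mathscr{H}^2 \subsetneq \mathscr{H}^1$, using Lemma~\ref{lem:pisiercond} to translate the obstruction into a statement about the skew form $J_\varphi$. Concretely, it suffices to exhibit a single Dirichlet series $\varphi$ which induces a bounded functional on $\mathscr{H}^1$ (equivalently, $\varphi \in (\mathscr{H}^1)^\ast$, i.e. $H_\varphi$ has a bounded symbol on the polytorus) but for which the skew form $J_\varphi$ is unbounded on $\ell^2 \times \ell^2$. Since $(\mathscr{H}^1)^\ast \subseteq \left(\partial^{-1}(\mathscr{H}^2\odot\partial\mathscr{H}^2)\right)^\ast$ would be needed for equality in \eqref{eq:productincl2}, producing such a $\varphi$ proves the inclusion is strict.

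First I would recall the Ortega-Cerd\'a--Seip construction: they build $\varphi$ with nonnegative coefficients $\rho_k \geq 0$ supported on products of distinct primes from a carefully chosen finite set, arranged so that $\varphi \in (\mathscr{H}^1)^\ast$ (via, e.g., a bounded symbol coming from a product of one-dimensional Nehari-type symbols, or via the $\mathscr{H}^1$-duality estimate) while the associated multiplicative Hankel form $H_\varphi$ fails to be bounded on $\mathscr{H}^2 \times \mathscr{H}^2$ — the failure being quantitative, with norms of truncations growing. The key point provided by Lemma~\ref{lem:pisiercond} is that \emph{because the coefficients $\rho_k$ are nonnegative}, boundedness of $J_\varphi$ is equivalent to boundedness of $H_\varphi$, so the very same example that separates $\mathscr{H}^2\odot\mathscr{H}^2$ from $\mathscr{H}^1$ automatically separates $\partial^{-1}(\mathscr{H}^2\odot\partial\mathscr{H}^2)$ from $\mathscr{H}^1$. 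Thus the second inclusion in \eqref{eq:productincl2} is strict.

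So the proof essentially reduces to: (i) invoke the Ortega-Cerd\'a--Seip result that there exists $\varphi \in (\mathscr{H}^1)^\ast$ with $\rho_k \geq 0$ and $\varphi \notin (\mathscr{H}^2\odot\mathscr{H}^2)^\ast$; (ii) apply the "in particular" clause of Lemma~\ref{lem:pisiercond} to deduce $\varphi \notin \left(\partial^{-1}(\mathscr{H}^2\odot\partial\mathscr{H}^2)\right)^\ast$; (iii) conclude, via the definition of the inclusion $\mathscr{K}^\ast \subseteq \mathscr{X}$ from the Notation subsection, that $\left(\partial^{-1}(\mathscr{H}^2\odot\partial\mathscr{H}^2)\right)^\ast$ is strictly smaller than $(\mathscr{H}^1)^\ast$, hence by duality (both spaces having $\mathscr{P}$ dense) that $\partial^{-1}(\mathscr{H}^2\odot\partial\mathscr{H}^2) \subsetneq \mathscr{H}^1$.

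The main obstacle is that one must be slightly careful that the Ortega-Cerd\'a--Seip example is genuinely of the form covered by Lemma~\ref{lem:pisiercond} — that is, that it can be taken with nonnegative coefficients and lies in $(\mathscr{H}^1)^\ast$ in the precise functional-analytic sense used here. If their published example does not literally have nonnegative coefficients, one would replace the coefficients by their absolute values (or moduli) and check that this operation preserves membership in $(\mathscr{H}^1)^\ast$ while not decreasing $\|H_\varphi\|$ — for Hankel-type forms with the divisor structure this is standard, but it is the one spot requiring genuine verification rather than bookkeeping. Everything else is a direct citation of Lemma~\ref{lem:pisiercond} and the duality conventions already fixed in the paper.
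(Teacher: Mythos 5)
Your overall strategy (pass to dual spaces and exploit the ``in particular'' clause of Lemma~\ref{lem:pisiercond} for nonnegative coefficients) points in the right direction, but step (i) asks for an object that provably does not exist, so the argument collapses there. You want a single $\varphi\in(\mathscr{H}^1)^*$ whose skew form $J_\varphi$ is unbounded. But Corollary~\ref{cor:skewprod} gives $\partial^{-1}(\mathscr{H}^2\odot\partial\mathscr{H}^2)\subseteq\mathscr{H}^1$ with bounded inclusion, so \emph{every} $\varphi\in(\mathscr{H}^1)^*$ automatically restricts to a bounded functional on the skew product space, i.e.\ $J_\varphi$ is bounded; no qualitative counterexample can exist. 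Seen another way: $\mathscr{H}^2\odot\mathscr{H}^2\subseteq\mathscr{H}^1$ contractively, hence $(\mathscr{H}^1)^*\subseteq(\mathscr{H}^2\odot\mathscr{H}^2)^*$, so any $\varphi\in(\mathscr{H}^1)^*$ has $H_\varphi$ bounded, and for nonnegative coefficients Lemma~\ref{lem:pisiercond} then forces $J_\varphi$ bounded. You have also reversed the Ortega-Cerd\'a--Seip result: they produce a bounded Hankel form \emph{without} a bounded symbol, i.e.\ an element of $(\mathscr{H}^2\odot\mathscr{H}^2)^*$ not in $(\mathscr{H}^1)^*$ --- the opposite separation from the one you invoke --- and even that is obtained non-constructively from a norm comparison, not from an explicit example.

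The repair, which is the paper's actual argument, is quantitative. Both spaces are completions of the Dirichlet polynomials, so if they were equal their norms, and hence their dual norms, would be \emph{equivalent}; it therefore suffices to produce a sequence along which the ratio of dual norms diverges. Take $\varphi_d(s)=\prod_{j=1}^d\left(p_{2j-1}^{-s}+p_{2j}^{-s}\right)$. By \cite{OCS12}, $\|\varphi_d\|_{(\mathscr{H}^2\odot\mathscr{H}^2)^*}=2^{d/2}$, and since the coefficients of $\varphi_d$ are nonnegative, Lemma~\ref{lem:pisiercond} yields $\|\varphi_d\|_{(\partial^{-1}(\mathscr{H}^2\odot\partial\mathscr{H}^2))^*}\asymp 2^{d/2}$. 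On the other hand, $\|\varphi_d\|_{\mathscr{H}^1}=(4/\pi)^d$ while $\langle\varphi_d,\varphi_d\rangle=2^d$, so $\|\varphi_d\|_{(\mathscr{H}^1)^*}\geq(\pi/2)^d$. Since $\pi/2>\sqrt{2}$, the ratio of the two dual norms is unbounded as $d\to\infty$, and the spaces cannot coincide. So your instinct to use Lemma~\ref{lem:pisiercond} together with nonnegativity is exactly right, but it must be deployed on a sequence with diverging norm ratio rather than on a single element lying in one dual and not the other.
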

\begin{proof}
	Let $d$ be a positive integer and consider the function
	\[ \varphi_d(s) = \prod_{j=1}^d \left(p_{2j-1}^{-s} + p_{2j}^{-s}\right),\]
	where $\{p_j\}_{j\geq 1}$ again denotes the prime sequence.
	The norm of $\varphi_d$ as an element of the dual of $\mathscr{H}^2 \odot \mathscr{H}^2$ is $2^{d/2}$ \cite{OCS12}. Since the coefficients of $\varphi_d$ are non-negative, Lemma~\ref{lem:pisiercond} hence shows that
	\[ \|\varphi_d\|_{\left(\partial^{-1}(\mathscr{H}^2 \odot \partial \mathscr{H}^2) \right)^*} \asymp 2^{d/2}.\]
	On the other hand, consider $f_d = \varphi_d$ as an element of $\mathscr{H}^1$, $\|f_d\|_{\mathscr{H}^1} = (4/\pi)^d$ \cite{OCS12}. Since $\langle f_d, \varphi_d \rangle_2 = 2^d$, the functional induced by $\varphi_d$ on $\mathscr{H}^1$ has norm at least $(\pi/2)^d$. If it were the case that $\partial^{-1}(\mathscr{H}^2 \odot \partial \mathscr{H}^2) = \mathscr{H}^1$, then the norm of $\varphi_d$ as a functional on $\mathscr{H}^1$ and the norm as a functional on $\partial^{-1}\left(\mathscr{H}^2 \odot 
	\partial \mathscr{H}^2\right)$ would be equivalent, a contradiction as $d \to \infty$.
\end{proof}

The remaining question of whether
\begin{equation}
	\label{eq:producteq} \left(
	\partial^{-1}(\mathscr{H}^2 \odot 
	\partial \mathscr{H}^2)\right)^* = \left( \mathscr{H}^2 \odot \mathscr{H}^2 \right)^* 
\end{equation}
or, equivalently, whether the first inclusion in \eqref{eq:productincl2} is strict, appears to be subtle.  As we just saw in Lemma \ref{lem:pisiercond} it can be rephrased as to ask if the forms \eqref{eq:skewform} and \eqref{eq:multhankform} are simultaneously bounded, which would mean precisely that \[ \left(\frac{\log n}{\log m + \log n} \right)_{m,n \geq 1}\]
is a Schur multiplier on the class of multiplicative Hankel forms. Specializing to the one-variable case by only considering integers of the form $2^k$, we see that the analogue of \eqref{eq:producteq} for the classical Hardy space $H^2(\mathbb{T})$ is equivalent to the statement that $(j+1)/(j+k+1)$ is a Schur multiplier on (additive) Hankel forms, as discussed in the introduction of this section.

However, by applying Theorem~\ref{thm:matrixemb} in full force together with Schur multiplier techniques, we are able to show that the inclusion is strict when $\mathscr{H}^2$ is replaced by $\mathscr{H}^2_0$. We define  $\partial^{-1}\left(\mathscr{H}^2_0\odot\partial\mathscr{H}^2_0\right)$ in exact analogy with our previous considerations, except that we impose all of its elements $f$ to have constant term $f(+\infty) = 0$.

\begin{thm} \label{thm:H20strict}
	$\mathscr{H}^2_0\odot\mathscr{H}^2_0 \subsetneq \partial^{-1}\left(\mathscr{H}^2_0\odot\partial\mathscr{H}^2_0\right)$.
\end{thm}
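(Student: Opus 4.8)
The plan is to combine duality with the matrix embedding of Theorem~\ref{thm:matrixemb} in order to reduce the strictness of the inclusion to a Schur multiplier statement, and then to invoke the classical non-multiplier phenomenon underlying Pisier's example. Both $\mathscr{H}^2_0\odot\mathscr{H}^2_0$ and $\partial^{-1}(\mathscr{H}^2_0\odot\partial\mathscr{H}^2_0)$ are completions of the space of Dirichlet polynomials with vanishing constant term, and the inclusion between them (bounded, by the product rule, exactly as in \eqref{eq:skewproductcontainment}) is an isomorphism onto if and only if the two norms are equivalent on polynomials. It therefore suffices to produce a formal Dirichlet series $\varphi=\sum_{\Omega(n)\geq 2}\overline{\rho_n}\,n^{-s}$ such that $\upsilon_\varphi$ is bounded on $\mathscr{H}^2_0\odot\mathscr{H}^2_0$ but unbounded on $\partial^{-1}(\mathscr{H}^2_0\odot\partial\mathscr{H}^2_0)$, since then the two norms cannot be equivalent on polynomials. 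For this we use the evident analogue of Lemma~\ref{lem:pisiercond}: its proof goes through without the hypothesis $\varphi\in\mathscr{H}^2$, and since all constant terms vanish here it gives $\|\upsilon_\varphi\|_{(\partial^{-1}(\mathscr{H}^2_0\odot\partial\mathscr{H}^2_0))^\ast}\asymp\|J_\varphi\|$, where $J_\varphi$ is the form $J_\varphi(a,b)=\sum_{m,n\geq 2}a_m b_n\frac{\log n}{\log m+\log n}\rho_{mn}$ on $\ell^2\times\ell^2$, while $\|\upsilon_\varphi\|_{(\mathscr{H}^2_0\odot\mathscr{H}^2_0)^\ast}=\|H_\varphi\|_0$.

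Next, take $C=(c_{j,k})$ to be a bounded operator on $\ell^2$ and let $\varphi$ be the symbol of Theorem~\ref{thm:matrixemb}, so that $\rho_n=\overline{c_{j,k}}$ if $n=p_{2j-1}p_{2k}$ and $\rho_n=0$ otherwise. By part~(a) of that theorem $\|H_\varphi\|_0=\|C\|<\infty$, so $\upsilon_\varphi$ is bounded on $\mathscr{H}^2_0\odot\mathscr{H}^2_0$. Each integer $p_{2j-1}p_{2k}$ is a product of two distinct primes, so the only factorizations $mn=p_{2j-1}p_{2k}$ with $m,n\geq 2$ are the two obvious ones; hence $J_\varphi$ splits into two pieces acting on disjoint families of coordinates, and testing it against $a$ supported on $\{p_{2j-1}^{-s}\}$ and $b$ supported on $\{p_{2k}^{-s}\}$ yields $\|J_\varphi\|\geq\|C\circ W\|$, where $\circ$ denotes the Schur (entrywise) product and $W=\left(\frac{\log p_{2k}}{\log p_{2j-1}+\log p_{2k}}\right)_{j,k\geq 1}$. (In fact $\|J_\varphi\|\asymp\|C\circ W\|$, the complementary piece being $C-C\circ W$, but only the lower bound is needed.) Thus the theorem will follow once we find a bounded $C$ for which $C\circ W$ fails to be a bounded operator, that is, once we show that $W$ is not a bounded Schur multiplier on $\mathcal{B}(\ell^2)$.

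To prove this we linearize the weights along a sparse subsequence of primes. By the prime number theorem (the gap near $x$ being $o(x)$, indeed $\ll x/\log^2 x$, leaves ample slack) we may choose strictly increasing indices $\{n_a\}_{a\geq 1}$ and $\{m_b\}_{b\geq 1}$ with $\log p_{2n_a-1}=a+\varepsilon_a$ and $\log p_{2m_b}=b+\delta_b$, where the $\varepsilon_a,\delta_b$ can be taken exponentially small so that $\sum_a\varepsilon_a^2+\sum_b\delta_b^2<\infty$. The restriction of $W$ to the rows $\{n_a\}$ and columns $\{m_b\}$ is then $\widetilde W_{a,b}=\frac{b+\delta_b}{a+b+\varepsilon_a+\delta_b}=\frac{b}{a+b}+E_{a,b}$, where $|E_{a,b}|\ll\frac{|\varepsilon_a|+|\delta_b|}{a+b}$, so that $\sum_{a,b}|E_{a,b}|^2\ll\sum_a\varepsilon_a^2+\sum_b\delta_b^2<\infty$ and hence $\|E\|_{\mathrm{Schur}}\leq\|E\|_{S_2}<\infty$. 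After relabelling rows and columns, $\left(\frac{b}{a+b}\right)_{a,b\geq 1}$ differs from $\left(\frac{j+1}{j+k+1}\right)_{j,k\geq 0}$ only by adjunction of the all-ones column $k=0$, and adjoining an all-ones column changes the Schur multiplier norm by at most $1$; since $\left(\frac{j+1}{j+k+1}\right)_{j,k\geq 0}$ is not a bounded Schur multiplier on $\mathcal{B}(\ell^2)$ \cite{DP97}, neither is $\left(\frac{b}{a+b}\right)_{a,b\geq 1}$, and since $E$ is a bounded Schur multiplier, neither is $\widetilde W$. As $\widetilde W$ is a submatrix of $W$ and submatrices of bounded Schur multipliers are again bounded Schur multipliers, $W$ is not a bounded Schur multiplier on $\mathcal{B}(\ell^2)$. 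Picking $C_0\in\mathcal{B}(\ell^2)$, indexed along the subsequence, with $C_0\circ\widetilde W\notin\mathcal{B}(\ell^2)$ and extending $C_0$ by zero off the subsequence gives the operator $C$ required above, and the theorem follows.

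The heart of the argument, and the main obstacle, is the third step: transferring the non-Schur-multiplier behaviour of the model matrix $\left(\frac{j+1}{j+k+1}\right)$ to the arithmetically determined weights $W$. The obstruction is that $W$ is merely comparable, not equal, to the model $\left(\frac{\log k}{\log j+\log k}\right)$, and comparability of entries does not preserve Schur multiplier bounds. The resolution exploits the full strength of Theorem~\ref{thm:matrixemb}: because $C$ may be placed on an arbitrarily sparse set of primes, the discrepancy between $W$ and the linearized model $\left(\frac{b}{a+b}\right)$ along that set becomes genuinely Hilbert--Schmidt, hence a harmless Schur-multiplier perturbation. One should also verify that only elementary input from analytic number theory is needed, namely the existence of a prime in each of a suitable sequence of short intervals near $e^a$, which follows from the prime number theorem alone.
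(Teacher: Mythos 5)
Your reduction is the same as the paper's: both arguments use the $\mathscr{H}^2_0$-analogue of Lemma~\ref{lem:pisiercond} together with Theorem~\ref{thm:matrixemb} to reduce strictness of the inclusion to the assertion that $W=\left(\log p_{2k}/(\log p_{2j-1}+\log p_{2k})\right)_{j,k\geq1}$ is not a bounded Schur multiplier on $\mathcal{B}(\ell^2)$, and your bookkeeping up to that point (splitting $J_\varphi$ into two pieces on disjoint coordinates, equality of functional norm and bilinear form norm) is correct. Where you diverge is the final step. The paper kills $W$ in one line with Bennett's criterion: the two iterated limits of its entries are $0$ and $1$, so it cannot be a Schur multiplier. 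You instead pass to a sparse subsequence of primes with $\log p_{2n_a-1}=a+\varepsilon_a$ and $\log p_{2m_b}=b+\delta_b$, show the corresponding submatrix is $\left(b/(a+b)\right)_{a,b\geq1}$ up to a Hilbert--Schmidt error $E$, and quote the Davidson--Paulsen fact that $\left((j+1)/(j+k+1)\right)_{j,k\geq0}$ is not a Schur multiplier. This works: submatrices of Schur multipliers are Schur multipliers, $\|E\|_{\mathrm{mult}}\leq\|E\|_{S_2}$, your bound $|E_{a,b}|\ll(|\varepsilon_a|+|\delta_b|)/(a+b)$ does give $\sum_{a,b}|E_{a,b}|^2<\infty$, and the all-ones column adjustment is harmless. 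Your route is longer than Bennett's criterion, but it buys a genuinely informative by-product: it shows that the classical matrix behind the non-complete-boundedness discussion of Section~\ref{sec:skew} (and Pisier's construction) embeds, up to an $S_2$ perturbation, into the multiplicative setting, which makes the link with \eqref{eq:producteq} explicit rather than merely analogical. Two small corrections. First, the $\varepsilon_a,\delta_b$ cannot be taken exponentially small on current knowledge --- that would require primes in intervals $[x,x+x^{1-c}]$ --- but this is not needed: the prime number theorem with its classical error term provides primes of either index parity in $[x,x+x/\log^2 x]$, whence $\varepsilon_a\ll a^{-2}$ and $\delta_b\ll b^{-2}$, and square-summability (indeed only $\sum_a\varepsilon_a^2/a+\sum_b\delta_b^2/b<\infty$) is all your estimate uses. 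Second, the piece of $J_\varphi$ you isolate is the form of $\overline{C}\circ W$ rather than $C\circ W$; since $W$ is real and $\|\overline{C}\circ W\|=\|C\circ W\|$, this is immaterial.
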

\begin{proof}
	Assume to the contrary that 
	\[\left(\frac{\log n}{\log m + \log n} \right)_{m,n \geq 2}\]
	is a Schur multiplier on bounded multiplicative Hankel forms 
	\[\rho(a,b) = \sum_{m=2}^\infty \sum_{n=2}^\infty a_m b_n \rho_{mn}, \qquad a,b \in \ell^2.\]
	Applied to every symbol constructed by the procedure of Theorem~\ref{thm:matrixemb}, we conclude that 
	\begin{equation} \label{eq:bennet}
		\left(\frac{\log{p_{2j-1}}}{\log{p_{2k}+\log{p_{2j-1}}}}\right)_{j,k \geq 1}
	\end{equation}
	is a Schur multiplier on all matrices $C$ defining bounded operators $C : \ell^2 \to \ell^2$. However, \eqref{eq:bennet} cannot be a Schur multiplier, as this would defy Bennett's criterion \cite{Bennett77}, since
	\[\lim_{j\to\infty} \lim_{k\to\infty}\frac{\log{p_{2j-1}}}{\log{p_{2k}+\log{p_{2j-1}}}} = 0,\]
	while
	\[\lim_{k\to\infty} \lim_{j\to\infty}\frac{\log{p_{2j-1}}}{\log{p_{2k}+\log{p_{2j-1}}}} = 1. \qedhere\]
\end{proof}
	
It must be stressed that Theorem \ref{thm:H20strict} \emph{does not} imply that the inclusion in \eqref{eq:skewproductcontainment} is strict. If we attempt to apply the proof to $\mathscr{H}^2\odot\mathscr{H}^2$, the matrices constructed by Theorem~\ref{thm:matrixemb} are Hilbert--Schmidt. To be a Schur multiplier on Hilbert--Schmidt matrices means only to have bounded entries, so no contradiction is obtained. However, we do feel that Theorem \ref{thm:H20strict} invokes the natural conjecture.

\begin{conj} \label{conj:skew}
	The inclusion between the standard weak product and its skew counterpart is strict, $\mathscr{H}^2\odot\mathscr{H}^2 \subsetneq \partial^{-1}\left(\mathscr{H}^2\odot\partial\mathscr{H}^2\right)$.
\end{conj}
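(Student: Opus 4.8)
The plan is to reduce the conjecture, via Lemma~\ref{lem:pisiercond}, to a Schur multiplier statement: it suffices to exhibit a Dirichlet series $\varphi(s) = \sum_{k} \overline{\rho_k} k^{-s}$ in $\mathscr{H}^2$ for which the multiplicative Hankel form $H_\varphi$ of \eqref{eq:multhankform} is bounded on $\ell^2 \times \ell^2$ while the skew form $J_\varphi$ of \eqref{eq:skewform} is not; equivalently, that $\left(\frac{\log n}{\log m + \log n}\right)_{m,n\geq 1}$ fails to be a Schur multiplier on the class of bounded multiplicative Hankel forms. Splitting off the contributions of the constant and linear terms exactly as in the decomposition \eqref{eq:decomp} and its skew analogue (both duals decompose as $\mathscr{H}^2$ intersected with the corresponding dual for the $\mathscr{H}^2_0$-spaces), one may further assume that $\varphi$ is supported on integers $n$ with $\Omega(n)\geq 2$. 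The task then becomes to produce a witness for Theorem~\ref{thm:H20strict} that in addition lies in $\mathscr{H}^2$.

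First I would record why the argument of Theorem~\ref{thm:H20strict} cannot be transplanted verbatim. There the witnessing symbol is built, via Theorem~\ref{thm:matrixemb}, from a \emph{bounded but non--Hilbert--Schmidt} operator $C$ on $\ell^2$, and therefore fails to lie in $\mathscr{H}^2$ --- as it must, since Schur multiplication by a matrix with bounded entries cannot destroy membership in the Hilbert--Schmidt class, so no Hilbert--Schmidt symbol can serve as a counterexample. Hence one genuinely needs a supply of bounded multiplicative Hankel forms on $\mathscr{H}^2 \times \mathscr{H}^2$ that are not Hilbert--Schmidt, and these lie beyond the reach of Theorem~\ref{thm:matrixemb}.

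The construction I would attempt is a \emph{stretched} asymmetric kernel: take $\varphi$ to be a sum of products over many disjoint blocks of primes, in the spirit of the product symbols $\prod_j(\alpha_j p_{2j-1}^{-s} + \beta_j p_{2j}^{-s})$ of \cite{OCS12} (now with $\alpha_j,\beta_j$ complex, so that $J_\varphi$ is not forced to be equivalent to $H_\varphi$) and of the passage from Hilbert's inequality \eqref{eq:ahilb1} to its multiplicative form \eqref{eq:hilbert}, so that boundedness of $H_\varphi$ on $\mathscr{H}^2$ can be obtained by a tensorization/averaging estimate even though the coefficient sequence does not lie in the Hilbert--Schmidt weight $d(n)\,|\rho_n|^2$. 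The point is that $\log n/(\log m + \log n)$ is harmless (it is $\approx 1/2$) when $\log m \asymp \log n$, but degenerates when the two integers differ greatly in arithmetic size; so the mass of $\varphi$ should be concentrated on pairs $(m,n)$ of wildly different sizes while keeping the form bounded. The unboundedness of $J_\varphi$ would again be established by decoupling the Hankel structure along a sub-family of integers and invoking a Bennett-type two-limit criterion \cite{Bennett77}, exactly as in Theorem~\ref{thm:H20strict}, the new requirement being that the decoupled matrix not be forced into the Hilbert--Schmidt class.

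The hard part is, frankly, the whole content of the conjecture. Certifying that a non--Hilbert--Schmidt multiplicative Hankel form is bounded on $\mathscr{H}^2$ --- as opposed to on $\mathscr{H}^2_0$, where Theorem~\ref{thm:matrixemb} gives an exact criterion --- is precisely what a multiplicative Nehari theorem would supply, and every bounded non--Hilbert--Schmidt form on $\mathscr{H}^2$ that one can currently name either has a bounded symbol (the Helson-inequality regime) or is of the Hilbert-kernel type \eqref{eq:hilbert}, whose boundedness on $\mathscr{H}^2$ is itself the open problem \eqref{eq:quehilbert}. So the genuine difficulty is to balance two competing demands: making $H_\varphi$ bounded on $\mathscr{H}^2$ without a Nehari theorem, and keeping enough asymmetry in $\varphi$ that $J_\varphi$ still blows up. Failing a direct construction, a reasonable fallback is to search for a soft obstruction --- for instance a failure of complementation in the spirit of Theorem~\ref{thm:mproj}, or a comparison of the two dual norms on a cleverly chosen finite-dimensional family of symbols --- that would force strictness in \eqref{eq:productincl2} without exhibiting $\varphi$ explicitly.
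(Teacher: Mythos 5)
This statement is a \emph{conjecture} in the paper, not a theorem: the authors give no proof, and they explicitly warn that Theorem~\ref{thm:H20strict} does not imply it, for exactly the reason you identify --- the symbols produced by Theorem~\ref{thm:matrixemb} from a bounded non--Hilbert--Schmidt operator are not in $\mathscr{H}^2$, and a matrix with bounded entries is automatically a Schur multiplier on Hilbert--Schmidt forms, so no Hilbert--Schmidt symbol can witness the failure. Your reduction via Lemma~\ref{lem:pisiercond} to the question of whether $\left(\log n/(\log m+\log n)\right)_{m,n\geq1}$ is a Schur multiplier on bounded multiplicative Hankel forms, and your diagnosis of where the $\mathscr{H}^2_0$ argument breaks down, faithfully reproduce the paper's own commentary surrounding \eqref{eq:producteq} and Conjecture~\ref{conj:skew}.

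That said, what you have written is not a proof, and you say as much: the entire content of the conjecture is the construction of a bounded, non--Hilbert--Schmidt multiplicative Hankel form on $\mathscr{H}^2\times\mathscr{H}^2$ whose skew counterpart $J_\varphi$ is unbounded, and no such form is exhibited. The ``stretched asymmetric kernel'' idea is left entirely speculative --- you do not verify boundedness of $H_\varphi$ for any candidate (indeed you note that the only known non--Hilbert--Schmidt candidates either have bounded symbols, in which case controlling $J_\varphi$ versus $H_\varphi$ is a separate unresolved issue, or are of the Hilbert-kernel type whose $\mathscr{H}^2$-boundedness is itself the open problem \eqref{eq:quehilbert}). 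One small caution on your reduction step: Lemma~\ref{lem:pisiercond} only gives the clean equivalence of the two dual norms when the coefficients $\rho_k$ are \emph{non-negative}, so any counterexample must have genuinely complex or sign-changing coefficients, and the passage from ``$J_\varphi$ unbounded for some $\varphi$ with $H_\varphi$ bounded'' to strictness of the inclusion should be routed through the duality statement for general symbols, not the non-negative case. Since the paper leaves the statement as a conjecture, the honest conclusion is that your proposal is a correct framing of the problem rather than a solution to it.
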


\section{Radial differentiation} \label{sec:radial}
From the polydisc point of view, the constructs of the last section all arose from the weighted differentiation operator $D$ of \eqref{eq:Ddef}, obtained from the Dirichlet series formalism. In the present section we shall consider instead the more natural radial differentiation operator of equation \eqref{eq:Rdiff}. Before commencing, note that as in Theorem~\ref{thm:mproj} every Dirichlet series may be decomposed into $m$-homogeneous subseries,
\[f(s) = \sum_{n=0}^\infty a_n n^{-s} = \sum_{m=0}^\infty \Big(\sum_{\Omega(n)=m} a_n n^{-s}\Big) = \sum_{m=0}^\infty P_mf(s).\]
Through the Bohr lift, this is equivalent to the corresponding decomposition of a power series in a countably infinite number of variables,
\[F(z) = \sum_{n=0}^\infty a_n z^{\kappa(n)} = \sum_{m=0}^\infty \Big(\sum_{|\kappa(n)|=m} a_n z^{\kappa(n)}\Big) = \sum_{m=0}^\infty P_m F(z),\qquad z = (z_1,z_2,\ldots).\]
We recall that $\kappa(n) = (\kappa_1, \kappa_2, \ldots)$ is the finitely supported multi-index associated to every positive integer $n$ through its prime decomposition, so that
\[|\kappa(n)| = \Omega(n) = \sum_j \kappa_j.\]
Consider now, for any $z \in \mathbb{T}^\infty$, the following power series in one variable $w$.
\[F_z(w) = F(z w) = \sum_{n=1}^\infty a_n z^{\kappa(n)} w^{\Omega(n)} = \sum_{m=0}^\infty P_m F(z)w^m.\]
Observe in particular that the $m$th coefficient of $F_z(w)$ is the $m$-homogeneous subseries of $F$. From here it is clear that differentiation in the auxiliary variable $w$ allows us to capture the natural radial differentiation of the polydisc, since every monomial of order $m$ is treated equally. This is further justified by the formal computation
\[w\frac{d}{dw}F_z(w) = w\sum_{j=1}^\infty z_j \partial_{z_j} F(wz) = (RF)_z(w).\]

We have the following analogue of Lemma~\ref{lem:Hpi}. We also point out that through the Bohr lift a similar statement can be made for Dirichlet series.
\begin{lem} \label{lem:wnorm}
Let $F \in H^p(\mathbb{T}^\infty)$, $0 < p < \infty$. Then $F_z \in H^p(\mathbb{T})$ for almost every $z \in \mathbb{T}^\infty$ and
\begin{equation} \label{eq:wnorm}
	\|F\|_{H^p(\mathbb{T}^\infty)} = \left( \int_{\mathbb{T}^\infty} \|F_z\|_{H^p(\mathbb{T})}^p \, dm_\infty(z) \right)^{1/p}. 
\end{equation}
\end{lem}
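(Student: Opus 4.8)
The plan is to mimic the proof of Lemma~\ref{lem:Hpi} (i.e. the extraction from Bayart \cite{Bayart02}), but organized around the auxiliary one-variable slicing $F_z(w) = F(zw)$ rather than the twisted Dirichlet series $f_\chi$. The key observation is that the two pictures are essentially the same: evaluating a character twist $f_\chi$ along a vertical line and evaluating the slice $F_z$ on the circle are related by the ergodic/Kronecker flow $t \mapsto (p_j^{-it})_j$ on $\mathbb{T}^\infty$. Concretely, I would first reduce to $F$ a polynomial (a finite sum of monomials $z^{\kappa(n)}$), where both sides of \eqref{eq:wnorm} are manifestly finite and the manipulations below are legitimate; the general case then follows by density of polynomials in $H^p(\mathbb{T}^\infty)$ for $0<p<\infty$ together with the fact that both sides define (quasi-)norms, with one side finite iff the other is.

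For a polynomial $F$, the slice $F_z(w) = \sum_n a_n z^{\kappa(n)} w^{\Omega(n)}$ is a polynomial in $w$, hence trivially in $H^p(\mathbb{T})$ for every $z$. The core computation is the identity
\[
\int_{\mathbb{T}^\infty} \|F_z\|_{H^p(\mathbb{T})}^p \, dm_\infty(z) = \int_{\mathbb{T}^\infty} \int_{\mathbb{T}} |F(zw)|^p \, dm_1(w)\, dm_\infty(z) = \int_{\mathbb{T}^\infty} |F(z)|^p \, dm_\infty(z),
\]
where the last step is the translation-invariance of the Haar measure $m_\infty$ on the compact group $\mathbb{T}^\infty$: for each fixed $w \in \mathbb{T}$, the diagonal map $z \mapsto zw = (z_1 w, z_2 w, \ldots)$ is a measure-preserving automorphism of $\mathbb{T}^\infty$ (it is translation by the element $(w,w,w,\ldots)\in\mathbb{T}^\infty$), and then one integrates the constant $\int_{\mathbb{T}}\|F\|_{H^p(\mathbb{T}^\infty)}^p\,dm_1(w)$. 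This gives \eqref{eq:wnorm} for polynomials with equality (not just $\asymp$), and in particular identifies $\|F_z\|_{H^p(\mathbb{T})}$ as an $L^p(dm_\infty)$ function of $z$.

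To pass to general $F \in H^p(\mathbb{T}^\infty)$ I would take polynomials $F^{(N)} \to F$ in $H^p(\mathbb{T}^\infty)$ (e.g. Fejér-type or Cesàro means of the Bohr expansion, which converge in $H^p$ for all $0<p<\infty$), apply the polynomial identity to $F^{(N)} - F^{(M)}$ to see that $z \mapsto \|F^{(N)}_z\|_{H^p(\mathbb{T})}$ is Cauchy in $L^p(\mathbb{T}^\infty)$; a subsequence then converges $m_\infty$-a.e., and along it $F^{(N_k)}_z \to F_z$ in $H^p(\mathbb{T})$ for a.e.\ $z$, which identifies the limit and yields both the a.e.\ membership $F_z \in H^p(\mathbb{T})$ and the norm identity \eqref{eq:wnorm} in the limit. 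The main obstacle is the measurability/a.e.\ bookkeeping in this limiting argument — making sure that $z \mapsto F_z$ is a genuine (strongly measurable) $H^p(\mathbb{T})$-valued map and that the a.e.\ limit really is $F_z$ for the original $F$; this is exactly the point where one should cite the analogous reasoning in \cite{Bayart02} (see also \cite{HLS97}) rather than redo it. Everything else — the translation invariance of $m_\infty$, the polynomial reduction, the Fubini step — is routine. If one only wants the norm equivalence $\asymp$ rather than equality, the argument can be shortened considerably, but the slicing here in fact gives equality, so I would state it that way.
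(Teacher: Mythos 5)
Your proof is correct and its core is exactly the paper's argument: Fubini's theorem combined with the fact that the diagonal translation $z \mapsto wz$ preserves Haar measure on $\mathbb{T}^\infty$, so that $\int_{\mathbb{T}^\infty}\int_{\mathbb{T}}|F(zw)|^p\,dm_1(w)\,dm_\infty(z)=\|F\|_{H^p(\mathbb{T}^\infty)}^p$. The polynomial-approximation scaffolding you add is only extra bookkeeping for the a.e.\ membership $F_z\in H^p(\mathbb{T})$ (relevant mainly for $p<1$), which the paper leaves implicit; it does not change the approach.
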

\begin{proof}
This follows from Fubini's theorem and the fact that $z \mapsto F(z)$ and $z\mapsto F(wz)$, for $w \in \mathbb{T}$, have equal $H^p(\mathbb{T}^\infty)$-norm.
\end{proof}
For $\theta \in [0, 2\pi)$, let $\Gamma_\alpha(\theta)$ denote the Stolz angle in $\mathbb{D}$ with vertex at $e^{i\theta}$ and of some fixed aperture $\alpha < \pi/2$. The (slightly non-standard) square function $Sg$ of a function $g$ holomorphic in $\mathbb{D}$ is given by
\[Sg(\theta) = \left( \int_{\Gamma_\alpha(\theta)} |wg'(w)|^2 \, dA(w) \right)^{1/2},\]
where $dA$ denotes the normalized area element. If $g(0)=0$ we have that $\|g\|_{H^p(\mathbb{T})} \asymp \|Sg\|_{L^p(\mathbb{T})}$. Since $F_z(0)=F(0)$ for every $z \in \mathbb{T}^\infty$, this immediately gives us the analogue of Theorem \ref{thm:sqfcn}.
\begin{thm}
Let $F \in H^p(\mathbb{T}^\infty)$, $0 < p < \infty$. Then
\[\|F\|_{H^p(\mathbb{T}^\infty)}^p \asymp |F(0)|^p + \int_{\mathbb{T}^\infty} \int_0^{2\pi}\left(\int_{\Gamma_\alpha(\theta)} |(RF)_z(w)|^2 \, dA(w) \right)^{p/2} \, \frac{d\theta}{2\pi} \, dm_\infty(z) .\]
\end{thm}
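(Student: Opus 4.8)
The plan is to reduce the $H^p(\mathbb{T}^\infty)$ statement to the corresponding one-variable square function estimate on $H^p(\mathbb{T})$, exactly in the spirit of the proof of Theorem~\ref{thm:sqfcn}, but now using the auxiliary slice functions $F_z(w) = F(wz)$ and the radial differentiation operator $R$ in place of the twists $f_\chi$ and the half-plane differentiation $D$. First I would recall from the discussion preceding the statement that the $m$th Taylor coefficient of $w \mapsto F_z(w)$ is precisely $P_m F(z)$, and that $w\frac{d}{dw} F_z(w) = (RF)_z(w)$; in particular $F_z(0) = F(0)$ for every $z \in \mathbb{T}^\infty$, so the constant term is handled uniformly and may be split off as the $|F(0)|^p$ term.

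Next I would invoke the one-variable square function characterization quoted just above the statement: for $g$ holomorphic in $\mathbb{D}$ with $g(0) = 0$, one has $\|g\|_{H^p(\mathbb{T})}^p \asymp \|Sg\|_{L^p(\mathbb{T})}^p = \int_0^{2\pi} \left( \int_{\Gamma_\alpha(\theta)} |wg'(w)|^2 \, dA(w) \right)^{p/2} \frac{d\theta}{2\pi}$, where the implicit constants depend only on $p$ and the fixed aperture $\alpha$. Applying this with $g = F_z - F(0)$ (which vanishes at the origin and has the same derivative as $F_z$), and noting $wg'(w) = w\frac{d}{dw}F_z(w) = (RF)_z(w)$, gives
\[ \|F_z - F(0)\|_{H^p(\mathbb{T})}^p \asymp \int_0^{2\pi} \left( \int_{\Gamma_\alpha(\theta)} |(RF)_z(w)|^2 \, dA(w) \right)^{p/2} \frac{d\theta}{2\pi} \]
for almost every $z$. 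Since $|F_z(0)|^p = |F(0)|^p$ and adding or subtracting a constant changes the $H^p(\mathbb{T})$-norm by at most a $p$-dependent constant factor (for $0 < p < \infty$, via $|a+b|^p \asymp |a|^p + |b|^p$), this is equivalent to $\|F_z\|_{H^p(\mathbb{T})}^p \asymp |F(0)|^p + \int_0^{2\pi}(\cdots)^{p/2}\,\frac{d\theta}{2\pi}$.

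Finally I would integrate this pointwise-in-$z$ equivalence over $\mathbb{T}^\infty$ against $dm_\infty(z)$ and apply Lemma~\ref{lem:wnorm}, which identifies $\|F\|_{H^p(\mathbb{T}^\infty)}^p$ with $\int_{\mathbb{T}^\infty} \|F_z\|_{H^p(\mathbb{T})}^p \, dm_\infty(z)$; the constant term $|F(0)|^p$ simply integrates to itself since $m_\infty$ is a probability measure. This yields the asserted two-sided estimate, with one side finite precisely when the other is. The only genuine point requiring care — the analogue of the "main obstacle" — is the measurability and almost-everywhere validity of the slice statement: one must check that $F_z \in H^p(\mathbb{T})$ for a.e.\ $z$ (which is Lemma~\ref{lem:wnorm}) and that $z \mapsto \int_0^{2\pi}(\int_{\Gamma_\alpha(\theta)}|(RF)_z(w)|^2\,dA(w))^{p/2}\,d\theta$ is measurable, so that Fubini/Tonelli applies; this is routine since $RF$ is a well-defined element of the Nevanlinna-type class obtained by slicing, and the square-function integrand depends continuously on the Taylor coefficients of $F_z$, which in turn depend measurably on $z$.
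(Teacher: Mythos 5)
Your proposal is correct and follows essentially the same route as the paper, which treats this theorem as an immediate consequence of the one-variable estimate $\|g\|_{H^p(\mathbb{T})}\asymp\|Sg\|_{L^p(\mathbb{T})}$ for $g(0)=0$, the identity $w\frac{d}{dw}F_z(w)=(RF)_z(w)$ with $F_z(0)=F(0)$, and Lemma~\ref{lem:wnorm}. The only slightly loose step is the claim ``$|a+b|^p\asymp|a|^p+|b|^p$'' (one direction fails in general); the two-sided equivalence $\|F_z\|_{H^p(\mathbb{T})}^p\asymp|F(0)|^p+\|F_z-F(0)\|_{H^p(\mathbb{T})}^p$ should instead be justified by combining the one-sided $p$-triangle inequality with the point-evaluation bound $|F_z(0)|\leq\|F_z\|_{H^p(\mathbb{T})}$, exactly as the boundedness of $f\mapsto a_1$ is used in the proof of Theorem~\ref{thm:sqfcn}.
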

Now most of the arguments of the previous section can be repeated. We collect the results that follow without providing details. Note in particular the satisfying conclusion obtained for the finite-dimensional polydisc. Indeed, this result partly motivates the existence of this section. 
\begin{cor} \label{cor:radial}
We have that 
\begin{equation*} 
H^2(\mathbb{T}^\infty) \odot H^2(\mathbb{T}^\infty) \subseteq R^{-1}\left(H^2(\mathbb{T}^\infty) \odot R H^2(\mathbb{T}^\infty)\right) \subsetneq H^1(\mathbb{T}^\infty). 
\end{equation*}
On the other hand, when $d < \infty$ it holds that
\begin{equation*} 
H^2(\mathbb{T}^d) \odot H^2(\mathbb{T}^d) = R^{-1}\left(H^2(\mathbb{T}^d) \odot R H^2(\mathbb{T}^d)\right) = H^1(\mathbb{T}^d). 
\end{equation*}
\end{cor}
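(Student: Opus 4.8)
The plan is to transcribe Section~\ref{sec:skew} with the weighted operator $D$ replaced by the radial operator $R$, using the square function characterization of $H^p(\mathbb{T}^\infty)$ stated immediately above the corollary (the one involving $(RF)_z(w)$ and the Stolz-angle area integral) in place of Theorem~\ref{thm:sqfcn}. The leftmost inclusion is formal: since $R = \sum_j z_j\partial_{z_j}$ is a derivation, a finite sum $F = \sum_k f_k g_k$ satisfies $RF = \sum_k f_k(Rg_k) + \sum_k g_k(Rf_k)$, and each of these two sums is by definition an element of $H^2 \odot RH^2$; taking the infimum over representations of $F$ gives the bounded inclusion $H^2(\mathbb{T}^\infty)\odot H^2(\mathbb{T}^\infty) \subseteq R^{-1}(H^2(\mathbb{T}^\infty)\odot RH^2(\mathbb{T}^\infty))$, and this argument restricts verbatim to $d$ variables.

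For the inclusion $R^{-1}(H^2(\mathbb{T}^\infty)\odot RH^2(\mathbb{T}^\infty)) \subseteq H^1(\mathbb{T}^\infty)$ I would repeat the proof of Corollary~\ref{cor:skewprod}. It suffices to treat a single product: let $F$ be such that $RF = f(Rg)$ with $F(0) = 0$, and, since $\|g - g(0)\|_{H^2} \le \|g\|_{H^2}$, assume also $g(0)=0$. For fixed $z \in \mathbb{T}^\infty$ the identity $w\,\frac{d}{dw}F_z(w) = (RF)_z(w)$ from the text combined with $(f(Rg))_z(w) = f_z(w)(Rg)_z(w)$ gives $(RF)_z(w) = f_z(w)\, w g_z'(w)$, so the square function theorem yields
\[ \|F\|_{H^1(\mathbb{T}^\infty)} \asymp \int_{\mathbb{T}^\infty} \int_0^{2\pi} \left( \int_{\Gamma_\alpha(\theta)} |f_z(w)|^2 \, |w g_z'(w)|^2 \, dA(w) \right)^{1/2} \frac{d\theta}{2\pi} \, dm_\infty(z). \]
Dominating $|f_z(w)|$ by the nontangential maximal function $(f_z)^\ast(\theta)$ over $\Gamma_\alpha(\theta)$, applying Cauchy--Schwarz first in $\theta$ and then in $z$, and invoking the one-variable estimates $\|(f_z)^\ast\|_{L^2(\mathbb{T})} \asymp \|f_z\|_{H^2(\mathbb{T})}$ and $\|S g_z\|_{L^2(\mathbb{T})} \asymp \|g_z\|_{H^2(\mathbb{T})}$ (the latter using $g_z(0)=0$) together with Lemma~\ref{lem:wnorm}, the right-hand side is $\ll \|f\|_{H^2(\mathbb{T}^\infty)}\|g\|_{H^2(\mathbb{T}^\infty)}$. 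Since nothing in this chain uses infinitely many variables, the same estimate proves $R^{-1}(H^2(\mathbb{T}^d)\odot RH^2(\mathbb{T}^d)) \subseteq H^1(\mathbb{T}^d)$.

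It remains to argue strictness for $d=\infty$ and equality for $d<\infty$. For $\varphi(s) = \sum_k \overline{\rho_k} k^{-s}$ with $\rho_k \ge 0$, a computation as in Lemma~\ref{lem:pisiercond} gives $\langle R^{-1}(f(Rg)), \varphi\rangle = \sum_{m,n} a_m b_n \frac{\Omega(n)}{\Omega(m)+\Omega(n)} \rho_{mn}$ (the $m=n=1$ term being $0$, with $a,b$ the coefficients of $f,g$ and $R^{-1}$ the primitive with vanishing constant term); since the multiplier lies in $[0,1]$ this is dominated in absolute value by $H_\varphi(|a|,|b|) \le \|H_\varphi\|\,\|f\|_{\mathscr{H}^2}\|g\|_{\mathscr{H}^2}$, and extending to finite sums shows $\|\varphi\|_{(R^{-1}(H^2\odot RH^2))^\ast} \ll \|\varphi\|_{(\mathscr{H}^2\odot\mathscr{H}^2)^\ast}$, the reverse inequality being immediate from the leftmost inclusion. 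Applied to $\varphi_d(s) = \prod_{j=1}^d(p_{2j-1}^{-s}+p_{2j}^{-s})$, whose $(\mathscr{H}^2\odot\mathscr{H}^2)^\ast$-norm is $2^{d/2}$ by \cite{OCS12}, this gives $\|\varphi_d\|_{(R^{-1}(H^2\odot RH^2))^\ast} \asymp 2^{d/2}$, whereas $\|\varphi_d\|_{\mathscr{H}^1} = (4/\pi)^d$ and $\langle \varphi_d,\varphi_d\rangle = 2^d$ force the norm of $\varphi_d$ as a functional on $\mathscr{H}^1$ to be at least $(\pi/2)^d$; since $(\pi/2)^d/2^{d/2} = (\pi/(2\sqrt 2))^d \to \infty$, the skew product cannot coincide with $H^1(\mathbb{T}^\infty)$. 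Finally, when $d<\infty$ all three spaces consist of functions of $z_1,\dots,z_d$, and combining the two inclusions above with the Ferguson--Lacey \cite{FL02} and Lacey--Terwilleger \cite{LT09} theorem $H^2(\mathbb{T}^d)\odot H^2(\mathbb{T}^d) = H^1(\mathbb{T}^d)$ squeezes $R^{-1}(H^2(\mathbb{T}^d)\odot RH^2(\mathbb{T}^d))$ between two copies of $H^1(\mathbb{T}^d)$, forcing equality throughout.

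The step I expect to demand the most care is the right inclusion: one must make the slice identity $(RF)_z(w) = f_z(w)\,w g_z'(w)$ rigorous (it comes from the formal computation in the text but requires the reduction to a single product and the normalization $F(0)=0$), and one must check that the passage from the $w$-plane area integral back to $\|F\|_{H^1(\mathbb{T}^\infty)}$ is genuinely parallel to Corollary~\ref{cor:skewprod}; the other two steps are formal manipulations or direct citations.
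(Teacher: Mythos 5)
Your proof is correct and is exactly the argument the paper intends: the paper offers no details for this corollary, saying only that ``most of the arguments of the previous section can be repeated,'' and your transcription of Section~\ref{sec:skew} with $D$ replaced by $R$ --- the derivation property of $R$ for the left inclusion, the slice identity $(RF)_z(w) = f_z(w)\,wg_z'(w)$ fed into the radial square function theorem for the right inclusion, the $\varphi_d$ example with the $[0,1]$-valued multiplier $\Omega(n)/(\Omega(m)+\Omega(n))$ for strictness, and the Ferguson--Lacey/Lacey--Terwilleger squeeze for finite $d$ --- is precisely that repetition, carried out correctly. In fact you have supplied the details the paper omits.
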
 
We remark that it is not clear how to obtain Corollary \ref{cor:radial} directly from the considerations in Section \ref{sec:skew}, due to the weights $\log p_j$ entering into Dirichlet series differentiation. In fact, suppose that $n = \prod_{j} p_j^{\kappa_j}.$
Then
\[\log{n} = \sum_{j} \kappa_j\log{p_j}\qquad\text{and}\qquad \Omega(n) = \sum_{j} \kappa_j,\]
illustrating the fact that the $R$ treats every prime equally, while the half-plane differentiation operator $D$ does not. In particular, the proof of Theorem~\ref{thm:H20strict} does not yield any information when $D$ is replaced by $R$, since the Schur multiplier vital to the proof has entries
\[\frac{\Omega(n)}{\Omega(m) + \Omega(n)} = \frac{\Omega(p_{2j-1})}{\Omega(p_{2k}) + \Omega(p_{2j-1})} = \frac{1}{2}.\]

It should also be pointed out that decomposing Dirichlet series (or power series on the infinite polydisc) into homogeneous subseries is not a new idea. It dates back at least to Bohnenblust--Hille \cite{BH31}, and has recently been applied to obtain results for composition operators on spaces of Dirichlet series \cite{BB14} as well as $L^1$-estimates for Dirichlet polynomials \cite{BS14}.

We conclude this paper by providing a charming inequality, which follows at once from Lemma~\ref{lem:wnorm} and the classical Hardy inequality
\begin{equation} \label{eq:hardyineq}
	\sum_{m=0}^\infty \frac{|b_m|}{m+1} \leq \pi \Bigg\|\sum_{m=0}^\infty b_m w^m\Bigg\|_{H^1(\mathbb{T})}.
\end{equation}
\begin{cor} \label{cor:hardyhomo}
	Let $f(s)=\sum_{n\geq1}a_n n^{-s} \in \mathscr{H}^1$ and consider the $m$-homogeneous subseries $P_m f(s) = \sum_{\Omega(n)=m}a_n n^{-s}$. Then
	\[\sum_{m=0}^\infty \frac{\|P_m f\|_{\mathscr{H}^1}}{m+1}\leq \pi \|f\|_{\mathscr{H}^1}.\] 
\end{cor}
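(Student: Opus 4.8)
The plan is to reduce the claimed multiplicative Hardy inequality to the classical one-variable Hardy inequality \eqref{eq:hardyineq} via the fibering $F_z(w) = F(wz)$ and Lemma~\ref{lem:wnorm}. First I would set $F = \mathscr{B}f$, so that by Bohr's identification $\|f\|_{\mathscr{H}^1} = \|F\|_{H^1(\mathbb{T}^\infty)}$ and $\|P_m f\|_{\mathscr{H}^1} = \|P_m F\|_{H^1(\mathbb{T}^\infty)}$. Recall that for each fixed $z \in \mathbb{T}^\infty$ the one-variable power series $F_z(w) = \sum_{m \geq 0} (P_m F)(z) w^m$ has $m$th Taylor coefficient $b_m(z) := (P_m F)(z)$, so applying \eqref{eq:hardyineq} pointwise in $z$ gives
\[ \sum_{m=0}^\infty \frac{|(P_m F)(z)|}{m+1} \leq \pi \|F_z\|_{H^1(\mathbb{T})} \]
for almost every $z$ (by Lemma~\ref{lem:wnorm}, $F_z \in H^1(\mathbb{T})$ for a.e. $z$).

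Next I would integrate this inequality over $\mathbb{T}^\infty$ against $dm_\infty(z)$. The right-hand side integrates, by Lemma~\ref{lem:wnorm} with $p = 1$, to exactly $\pi \|F\|_{H^1(\mathbb{T}^\infty)} = \pi \|f\|_{\mathscr{H}^1}$. For the left-hand side I would interchange the sum and the integral (justified by Tonelli, since all terms are non-negative), obtaining
\[ \sum_{m=0}^\infty \frac{1}{m+1} \int_{\mathbb{T}^\infty} |(P_m F)(z)| \, dm_\infty(z) = \sum_{m=0}^\infty \frac{\|P_m F\|_{H^1(\mathbb{T}^\infty)}}{m+1} = \sum_{m=0}^\infty \frac{\|P_m f\|_{\mathscr{H}^1}}{m+1}, \]
which is the desired left-hand side. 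Combining the two displays finishes the argument.

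I do not anticipate a serious obstacle here — the proof is essentially a Fubini/Tonelli bookkeeping exercise layered on top of two black-box facts already available in the excerpt. The one point that warrants a sentence of care is the identification of the $m$th Taylor coefficient of the fibered function $F_z$ with $(P_m F)(z)$: this is exactly the computation recorded just before Lemma~\ref{lem:wnorm}, namely $F_z(w) = \sum_{m\ge 0} P_m F(z) w^m$, so that the classical Hardy inequality applies with $b_m = P_m F(z)$ verbatim. A secondary (minor) point is measurability of $z \mapsto |(P_m F)(z)|$ and of the partial sums, which is immediate since each $P_m F$ is a bona fide $H^1(\mathbb{T}^\infty)$ function; this makes the Tonelli interchange legitimate. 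Hence the whole proof is just: apply \eqref{eq:hardyineq} fiberwise, integrate in $z$, and invoke \eqref{eq:wnorm} on both sides.
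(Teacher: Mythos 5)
Your proof is correct and is precisely the argument the paper intends: the corollary is stated there as following "at once" from Lemma~\ref{lem:wnorm} and Hardy's inequality \eqref{eq:hardyineq}, and you have filled in exactly that fiberwise application plus the Tonelli interchange. No issues.
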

Corollary~\ref{cor:hardyhomo} can be compared to the estimate $\|P_m f\|_{\mathscr{H}^1}\leq\|f\|_{\mathscr{H}^1}$ appearing in \cite[Lem.~3]{BS14}. Returning to the beginnings of this paper, we mention that Hardy's inequality \eqref{eq:hardyineq} in turn can be obtained by viewing the bounded symbol for the sharpest version of Hilbert's inequality \eqref{eq:neharisymbol} as an element in the dual of $H^1(\mathbb{T})$ (see \cite[pp.~47--49]{Duren}).

\bibliographystyle{amsplain} 
\bibliography{product} 
\end{document}